\newtheorem{theorem}{Theorem}[section]
\newtheorem{lemma}{Lemma}[section]
\newtheorem{corollary}{Corollary}[section]
\begin{document}

\title[On $q$-analogues of two-one formulas]{\small On $q$-analogues of two-one formulas for multiple harmonic sums and multiple zeta star values}

\author{Kh.~Hessami Pilehrood}
\address{Department of Mathematics and Statistics \\ Dalhousie University \\ Halifax, Nova Scotia, B3H 3J5, Canada}
\email{hessamik@gmail.com}

\author{T.~Hessami Pilehrood}
\address{Department of Mathematics and Statistics \\ Dalhousie University \\ Halifax, Nova Scotia, B3H 3J5, Canada}
\email{hessamit@gmail.com}

\begin{abstract}
Recently, the present authors jointly with Tauraso found a family of binomial identities for multiple harmonic sums (MHS)
on strings $(\{2\}^a,c,\{2\}^b)$ that appeared to be useful for proving new congruences for MHS as well as new relations for multiple zeta values.
Very recently, Zhao generalized this set of MHS identities to strings with repetitions of the above patterns and, as an application, proved the two-one formula for multiple zeta star values conjectured by Ohno and Zudilin. In this paper, we extend our approach to $q$-binomial identities and prove
$q$-analogues of two-one formulas for multiple zeta star values.
\end{abstract}

\maketitle

\section{Introduction}

Let $s_1,\ldots, s_m$ be positive integers, $s_1\ge 2.$ The multiple zeta star and zeta values are defined by the convergent series
\begin{align}
\zeta^{\star}({\bf s})&=\zeta^{\star}(s_1, \ldots, s_m)=\sum_{k_1\ge\ldots\ge k_m\ge 1}\frac{1}{k_1^{s_1}\cdots k_m^{s_m}}, \label{zs} \\
\zeta({\bf s})&=\zeta(s_1, \ldots, s_m)=\sum_{k_1>\ldots>k_m\ge 1}\frac{1}{k_1^{s_1}\cdots k_m^{s_m}}. \label{zz}
\end{align}
The origin of these numbers goes back to correspondence of Euler with Goldbach in 1742-43 (see \cite{Ho:slides})  and Euler's paper \cite{Euler}
that appeared in 1776. Euler studied double zeta values and established some important relation formulas for them. For example, he proved that
\begin{equation*}  
2\zeta^{\star}(n,1)=(n+2)\zeta(n+1)-\sum_{i=1}^{n-2}\zeta(n-i)\zeta(i+1), \qquad n\ge 2,
\end{equation*}
which, in particular, implies the simplest but nontrivial relation:
\begin{equation} \label{zeta21}
\zeta^{\star}(2,1)=2\zeta(3) \quad \text{or equivalently,} \quad \zeta(2,1)=\zeta(3).
\end{equation}

The systematic study of multiple zeta values began in the early 1990s with the works of Hoffman \cite{Ho:92} and Zagier \cite{Za:92}
and have continued with increasing attention in recent years (see survey articles \cite{BB}, \cite{Ho:05}, \cite{Wa:00}, \cite{Zu:05}).

There are also a lot of contributions on $q$-analogues of multiple zeta (star) values (see \cite{Br:05}, \cite{OZ:12}, \cite{Zhao:07}).

The purpose of the present paper is to establish  $q$-analogues of the two-one formulas for multiple zeta star values conjectured by Ohno and Zudilin \cite{OZ:08} and   proved very recently by Zhao \cite{Zhao:13}.

Zhao's proof is based on generalizations of binomial identities for finite multiple harmonic sums found in \cite{HP:12} to strings
with repeating collections of twos and ones. In this paper, we extend this approach to $q$-analogues of multiple harmonic sums and, as a limit case, obtain corresponding results for $q$-zeta values.

We begin with some basic notation.
Let $q$ be a real number with $0<q<1.$ The $q$-analogue of a non-negative integer $n$ is defined as
$$
[n]_q=\sum_{k=0}^{n-1}q^k=\frac{1-q^n}{1-q}.
$$
For any real number $a,$ put
$$
(a)_0:=(a;q)_0:=1, \qquad (a)_n:=(a;q)_n:=\prod_{k=0}^{n-1}(1-aq^k), \quad n\ge 1.
$$
Let $n, m$ denote integers. Then the Gaussian $q$-binomial coefficient is defined by
\begin{equation*}
\genfrac{[}{]}{0pt}{}{n}{m}:=\begin{cases}
\frac{(q)_n}{(q)_{m}(q)_{n-m}}, \quad\qquad &\text{if} \,\,\, 0\le m\le n, \\
\qquad 0, \quad\qquad & \text{otherwise.}
\end{cases}
\end{equation*}
For non-negative integers $n, m$ and ${\bf s}=(s_1, \ldots, s_m)\in{\mathbb Z}^m,$ ${\bf t}=(t_1,\ldots, t_m)\in{\mathbb Z}^m,$ define $q$-analogues of multiple harmonic sums
\begin{align*}
H^{\star}_n[{\bf s}]&=H^{\star}_n[s_1, \ldots, s_m]=\sum_{n\ge k_1\ge\ldots \ge k_m\ge 1}\frac{q^{k_1}}{[k_1]_q^{s_1}}
\cdots\frac{q^{k_m}}{[k_m]_q^{s_m}},  \\[2pt]
\mathcal{H}_n[{\bf s}; {\bf t}]&=\mathcal{H}_n[s_1,\ldots,s_m;t_1,\ldots,t_m]=
\sum_{n\ge k_1>\ldots k_m\ge 1}\prod_{j=1}^m\frac{q^{(t_j-1)k_j}(1+q^{k_j})}{[k_j]_q^{s_j}},
\end{align*}
with the convention that $\mathcal{H}_n[{\bf s}; {\bf t}]=0$ if $n<m,$ and
$H^{\star}_n[\emptyset]=\mathcal{H}_n[\emptyset;\emptyset]=1$ for all $n\ge 0$ and $m=0.$

We consider $q$-analogues of  multiple zeta star values (\ref{zs})
and multiple zeta  values (\ref{zz}),
which are defined by
\begin{equation} \label{starz}
\zeta_q^{\star}[{\bf s}]=\zeta_q^{\star}[s_1,\ldots,s_m]:=\sum_{k_1\ge\ldots\ge k_m\ge 1}\frac{q^{k_1}}{[k_1]_q^{s_1}}\cdots \frac{q^{k_m}}{[k_m]_q^{s_m}}
\end{equation}
and
\begin{equation*}
\hat{\mathfrak{z}}_q[{\bf s};{\bf t}]=\hat{\mathfrak{z}}_q[s_1,\ldots,s_m; t_1,\ldots,t_m]=\sum_{k=1}^{\infty}
\frac{q^{k^2+(t_1-1)k}(1+q^k)}{[k]_q^{s_1}}\,\mathcal{H}_{k-1}[s_2,\ldots,s_m; t_2,\ldots,t_m].
\end{equation*}
If $m=0,$ we put $\zeta^{\star}_q[\emptyset]=\hat{\mathfrak{z}}_q[\emptyset;\emptyset]=1.$


\begin{theorem} \label{T1}
Let $m\in {\mathbb N},$ $s_1, \ldots, s_m\in {\mathbb N}_0.$ Then
\begin{equation} \label{Eq16}
\zeta_q^{\star}[\{2\}^{s_1}, 1, \{2\}^{s_2}, 1, \ldots, \{2\}^{s_m},1]=\sum_{{\bf p}=(2s_1+1)\circ(2s_2+1)\circ\ldots\circ(2s_m+1)}
\hat{\mathfrak{z}}_q[{\bf p}; {\bf \widetilde{p}}],
\end{equation}
where $\circ$ is either comma or plus, and the string ${\bf \widetilde{p}}:=(s_1+1)\circ(s_2+1)\circ\ldots\circ(s_m+1)$
is associated with the string ${\bf p}.$ This means that the choice of commas and pluses in ${\bf p}=(2s_1+1)\circ(2s_2+1)\circ\ldots\circ(2s_m+1)$ and ${\bf \widetilde{p}}=(s_1+1)\circ(s_2+1)\circ\ldots\circ(s_m+1)$ is the same.
\end{theorem}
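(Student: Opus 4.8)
The plan is to obtain \eqref{Eq16} as the limit $n\to\infty$ of a \emph{finite} $q$-binomial identity for the truncated sums $H_n^\star$, in the spirit of \cite{HP:12} and \cite{Zhao:13}. I introduce the finite analogue of $\hat{\mathfrak{z}}_q$ carrying a Gaussian kernel on the outermost index,
\[
\widehat{Z}_n[{\bf s};{\bf t}]:=\sum_{k=1}^{n}q^{k^2}\,\frac{\genfrac{[}{]}{0pt}{}{n}{k}}{\genfrac{[}{]}{0pt}{}{n+k}{k}}\,\frac{q^{(t_1-1)k}(1+q^{k})}{[k]_q^{s_1}}\,\mathcal{H}_{k-1}[s_2,\ldots,s_m;t_2,\ldots,t_m],
\]
and reduce \eqref{Eq16} to the finite identity
\[
H_n^\star[\{2\}^{s_1},1,\ldots,\{2\}^{s_m},1]=\sum_{{\bf p}}\widehat{Z}_n[{\bf p};{\bf \widetilde{p}}]\qquad(n\ge 1),
\]
summed over the same $2^{m-1}$ comma/plus choices as in \eqref{Eq16}. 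Indeed $\genfrac{[}{]}{0pt}{}{n}{k}/\genfrac{[}{]}{0pt}{}{n+k}{k}=\prod_{i=1}^{k}(1-q^{\,n-k+i})/(1-q^{\,n+i})\in(0,1]$ tends to $1$ for each fixed $k$ as $n\to\infty$, while the factor $q^{k^2}$ forces absolute convergence, so dominated convergence gives $\widehat{Z}_n\to\hat{\mathfrak{z}}_q$; together with $H_n^\star\to\zeta_q^\star$ this yields \eqref{Eq16}.

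I would prove the finite identity by induction on the length of the string, stripping its leading entry through the elementary recurrence $H_n^\star[a_1,{\bf a}']=\sum_{j=1}^{n}q^{\,j}[j]_q^{-a_1}H_j^\star[{\bf a}']$ and feeding in the induction hypothesis for $H_j^\star[{\bf a}']$. The whole induction is driven by two Gaussian summation formulas, which I have verified in small cases:
\[
\sum_{j=k}^{n}\frac{q^{\,j}}{[j]_q^{2}}\,q^{k^2}\frac{\genfrac{[}{]}{0pt}{}{j}{k}}{\genfrac{[}{]}{0pt}{}{j+k}{k}}=q^{k^2}\frac{\genfrac{[}{]}{0pt}{}{n}{k}}{\genfrac{[}{]}{0pt}{}{n+k}{k}}\cdot\frac{q^{k}}{[k]_q^{2}},
\]
\[
\sum_{j=\kappa}^{n}\frac{q^{\,j}}{[j]_q}\,q^{\kappa^2}\frac{\genfrac{[}{]}{0pt}{}{j}{\kappa}}{\genfrac{[}{]}{0pt}{}{j+\kappa}{\kappa}}=q^{\kappa^2}\frac{\genfrac{[}{]}{0pt}{}{n}{\kappa}}{\genfrac{[}{]}{0pt}{}{n+\kappa}{\kappa}}\frac{q^{\kappa}}{[\kappa]_q}+\sum_{k=\kappa+1}^{n}q^{k^2}\frac{\genfrac{[}{]}{0pt}{}{n}{k}}{\genfrac{[}{]}{0pt}{}{n+k}{k}}\frac{1+q^{k}}{[k]_q}.
\]
When $s_1\ge 1$ the leading entry is a $2$ internal to the first block, and the first formula absorbs it into the outer factor of $\widehat{Z}_n$, raising the exponent of $[k]_q$ by two and that of $q^{k}$ by one; no branching occurs and the number of blocks is unchanged. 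When $s_1=0$ the leading entry is the $1$ that closes the first block; here the second formula is used, and after unfolding the outermost factor of $\mathcal{H}$ via $\mathcal{H}_{N}[{\bf s};{\bf t}]=\sum_{k\le N}q^{(t_1-1)k}(1+q^{k})[k]_q^{-s_1}\mathcal{H}_{k-1}[s_2,\ldots;t_2,\ldots]$, its two terms reproduce precisely the two alternatives for the first symbol $\circ$: the first term merges the $1$ into the adjacent block (the ``plus'' choice), while the second, carrying a fresh summation index $k>\kappa$ with weight $(1+q^{k})/[k]_q$, creates a new leading block $1$ (the ``comma'' choice). Running over all $(m-1)$-block compositions ${\bf p}'$, the plus terms assemble into the first-symbol-plus part and the comma terms into the first-symbol-comma part of $\sum_{{\bf p}}\widehat{Z}_n[{\bf p};{\bf \widetilde{p}}]$, so the induction closes.

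The main obstacle is precisely the two-term Gaussian identity above: unlike the first formula, its right-hand side is not a single closed term but splits off a new summation whose index exceeds $\kappa$, and it is this split that generates the $2^{m-1}$-fold comma/plus sum on the right of \eqref{Eq16}. These are the $q$-analogues, with $\genfrac{[}{]}{0pt}{}{n}{k}$ and $\genfrac{[}{]}{0pt}{}{n+k}{k}$ in place of $\binom{n}{k}$ and $\binom{n+k}{k}$, of the binomial identities underlying \cite{HP:12,Zhao:13}; I expect both to yield to a secondary induction on $n$ or to creative telescoping in $k$, but the delicate point is to verify that the super-exponential factors $q^{k^2}$ together with the telescoping products $\prod_i(1-q^{\,n-k+i})/(1-q^{\,n+i})$ recombine exactly as required, since a naive term-by-term comparison fails.

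Finally, the base case of the induction is $m=1$, $s_1=0$, namely the seed identity $H_n^\star[1]=\widehat{Z}_n[1;1]$, which I would establish directly by induction on $n$; applying the first Gaussian summation $s_1$ times then builds up the single-block identity $H_n^\star[\{2\}^{s_1},1]=\widehat{Z}_n[2s_1+1;s_1+1]$, a $q$-analogue of the Hessami Pilehrood--Hessami Pilehrood--Tauraso identity of \cite{HP:12}. Granting the two Gaussian summations, and hence the finite identity, \eqref{Eq16} follows at once on letting $n\to\infty$ as in the first paragraph.
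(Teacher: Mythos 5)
Your proposal is correct, and its outer skeleton coincides with the paper's: your $\widehat{Z}_n$ is exactly the paper's $\widehat{\mathcal{H}}_n$, your finite identity is the paper's Theorem \ref{t2}, and your limit step is the content of the paper's Lemma \ref{last} (your dominated-convergence argument is sound, since all entries satisfy $\widetilde{p}_j\ge 1$, hence $\mathcal{H}_{k-1}[{\bf p};\widetilde{{\bf p}}]$ grows at most polynomially in $k$, while $0<\genfrac{[}{]}{0pt}{}{n}{k}/\genfrac{[}{]}{0pt}{}{n+k}{k}\le 1$ tends to $1$ for fixed $k$ and $q^{k^2}$ dominates). Where you genuinely diverge is in the proof of the finite identity. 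The paper inducts on $n+m$: it peels off the indices equal to $n$, splits the compositions by hand into the two classes determined by the first $\circ$, sums over $l$ with a closed-form $q$-binomial identity, and then invokes (\ref{i2}) to absorb an auxiliary term $\widehat{\mathcal{H}}_n[-1,{\bf p};0,\widetilde{{\bf p}}]$. You instead induct on the number of entries, stripping one leading entry at a time, and let the comma/plus branching be generated automatically by the two-term structure of your second summation formula; your identification of the plus term $\widehat{Z}_n[p_1'+1,\ldots;\widetilde{p}_1'+1,\ldots]$ and the comma term $\widehat{Z}_n[1,{\bf p}';1,\widetilde{{\bf p}}']$ (after recognizing the inner $\kappa$-sum as $\mathcal{H}_{k-1}[{\bf p}';\widetilde{{\bf p}}']$) is exactly right. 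Your organization is more modular and makes the origin of the $2^{m-1}$ compositions transparent; what the paper's organization buys is that its toolkit (Lemma \ref{l1}, including the alternating identity (\ref{i1})) carries over unchanged to the companion result for strings ending in $2$ (Theorem \ref{t3}), whereas your route would need an alternating-sign analogue of your second formula for that case.

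The step you flag as the main obstacle is not actually an obstacle: both of your Gaussian summation formulas are contained in, or immediate from, the paper's Lemma \ref{l1}. Your first formula is identity (\ref{i4}) verbatim (cancel the factor $q^{k^2}$ from both sides). Your second formula is true and follows from (\ref{i2}) by induction on $n$: writing $R(n,k)=\genfrac{[}{]}{0pt}{}{n}{k}\big/\genfrac{[}{]}{0pt}{}{n+k}{k}$, one computes
\begin{equation*}
R(n+1,k)-R(n,k)=R(n+1,k)\,\frac{q^{n+1-k}(1-q^k)^2}{(1-q^{n+1})^2},
\end{equation*}
so after clearing the common factor $q^{n+1}(1-q)/(1-q^{n+1})^2$, the increment of your identity from $n$ to $n+1$ becomes
\begin{equation*}
q^{\kappa^2}\bigl(q^{\kappa}-q^{n+1}\bigr)R(n+1,\kappa)=\sum_{k=\kappa+1}^{n+1}q^{k(k-1)}\bigl(1-q^{2k}\bigr)R(n+1,k),
\end{equation*}
which is precisely (\ref{i2}) at level $n+1$ with $l=\kappa$, after multiplying (\ref{i2}) through by $1-q$. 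Moreover, the same computation at $\kappa=0$ (where $R(n,0)=1$) proves your seed identity $H_n^{\star}[1]=\widehat{Z}_n[1;1]$, i.e.\ (\ref{i3}), by induction on $n$. Thus your entire induction runs on the paper's identities (\ref{i2}) and (\ref{i4}) alone, and your proof closes completely.
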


In the next theorem, we consider two-one strings ending with $2.$

Let
\begin{equation*}
\overline{\mathfrak{z}}_q[{\bf s}; {\bf t}]=\overline{\mathfrak{z}}_q[s_1,\ldots,s_m; t_1,\ldots, t_m]=
\sum_{k_1>\ldots>k_m\ge 1}(-1)^{k_m-1}
q^{k_1^2-k_m(k_m-1)/2}
\prod_{j=1}^m\frac{(1+q^{k_j}) q^{(t_j-1)k_j}}{[k_j]_q^{s_j}}.
\end{equation*}

\begin{theorem} \label{T2}
Let $m, s_1, \ldots, s_m\in {\mathbb N}_0,$ $s_{m+1}\in {\mathbb N}.$  Then
\begin{equation} \label{Eq17}
\zeta_q^{\star}[\{2\}^{s_1}, 1,  \ldots, \{2\}^{s_m},1, \{2\}^{s_{m+1}}]=\sum_{{\bf p}=(2s_1+1)\circ\ldots\circ(2s_m+1)\circ(2s_{m+1})}
\overline{\mathfrak{z}}_q[{\bf p}; \widetilde{{\bf p}}],
\end{equation}
where $\circ$ is either comma or plus, and the string ${\bf \widetilde{p}}:=(s_1+1)\circ\ldots\circ(s_m+1)\circ(s_{m+1})$
is associated  with the string ${\bf p}.$ This means that the choice of commas and pluses in ${\bf p}=(2s_1+1)\circ\ldots\circ(2s_m+1)\circ(2s_{m+1})$ and ${\bf \widetilde{p}}=(s_1+1)\circ\ldots\circ(s_m+1)\circ(s_{m+1})$ is the same.
\end{theorem}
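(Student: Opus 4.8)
The plan is to mirror the proof of Theorem~\ref{T1}: first establish a finite identity at the level of the $q$-multiple harmonic sums $H^\star_n$, and then let $n\to\infty$. Concretely, I would look for a finite $q$-binomial identity of the shape
$$
H^\star_n[\{2\}^{s_1},1,\ldots,\{2\}^{s_m},1,\{2\}^{s_{m+1}}]=\sum_{{\bf p}}\overline{\mathfrak{z}}_{q,n}[{\bf p};\widetilde{\bf p}],
$$
in which $\overline{\mathfrak{z}}_{q,n}$ is a finite truncation of $\overline{\mathfrak{z}}_q$ carrying an explicit Gaussian $q$-binomial weight (of the type $\genfrac{[}{]}{0pt}{}{n}{k_1}$ together with a reciprocal $q$-binomial factor) that tends to $1$ as $n\to\infty$. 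Since $0<q<1$, all the series converge geometrically, so once the finite identity is available the passage to the limit is routine: the $q$-binomial weights approach their limits termwise and a dominated-convergence bound is immediate.

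For the finite identity itself I would induct on the number $m$ of inner two-one blocks, exactly as for Theorem~\ref{T1}. The combinatorics of the right-hand side --- the sum over all ways of turning each separator $\circ$ into a comma or a plus --- is generated by iterating a single two-variable $q$-summation (a $q$-Chu--Vandermonde / Rothe-type identity) once at each junction between consecutive blocks $(2s_i+1)$ and $(2s_{i+1}+1)$: merging the two adjacent indices corresponds to the ``$+$'' choice and keeping them separate to the ``,'' choice. This mechanism is identical to the one driving Theorem~\ref{T1}, and it accounts for the common factor $q^{k_1^2}$ on the outermost index, which arises from summing the leading block down from $n$.

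The genuinely new ingredient, and the step I expect to be the main obstacle, is the trailing block $\{2\}^{s_{m+1}}$. Because this block ends in a $2$ rather than a $1$, the innermost summation is not the one occurring in Theorem~\ref{T1}; it is instead an alternating $q$-binomial sum to which one applies the finite $q$-binomial theorem $\sum_{j}(-1)^j q^{\binom{j}{2}}\genfrac{[}{]}{0pt}{}{N}{j}x^j=(x;q)_N$. This is exactly what generates the sign $(-1)^{k_m-1}$ and the half-integer power $q^{-k_m(k_m-1)/2}=q^{-\binom{k_m}{2}}$ that separate $\overline{\mathfrak{z}}_q$ from $\hat{\mathfrak{z}}_q$. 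Getting the bookkeeping of this last step right --- matching the residual $q$-powers and the factor $(1+q^{k_m})$ against the definition of $\overline{\mathfrak{z}}_q$, and doing so compatibly with the final separator choice before $(2s_{m+1})$ --- is the delicate point.

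As a sanity check I would first settle the base case $m=0$, where the claim collapses to the single-variable identity
$$
\zeta_q^{\star}[\{2\}^{s_1}]=\sum_{k\ge1}\frac{(-1)^{k-1}(1+q^k)\,q^{k(k+1)/2+(s_1-1)k}}{[k]_q^{2s_1}},
$$
a $q$-analogue of the classical central-binomial evaluation of $\zeta(\{2\}^{s_1})$. Its finite counterpart is a clean $q$-binomial identity provable directly, and it already displays the sign and the $q^{-\binom{k}{2}}$ phenomenon in isolation. With the base case and the block-junction recursion in place, the inductive step need only carry the new trailing-$2$ summation through, and the theorem follows on letting $n\to\infty$.
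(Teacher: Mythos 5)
Your overall architecture coincides exactly with the paper's: the authors first prove a finite identity (Theorem~\ref{t3}), namely $H^{\star}_n[\{2\}^{s_1},1,\ldots,\{2\}^{s_m},1,\{2\}^{s_{m+1}}]=-\sum_{{\bf p}}\overline{\mathcal{H}}_n[{\bf p};\widetilde{\bf p}]$, in which $\overline{\mathcal{H}}_n$ is precisely the truncation you postulate, carrying the weight $\genfrac{[}{]}{0pt}{}{n}{k_1}\big/\genfrac{[}{]}{0pt}{}{n+k_1}{k_1}$, and then they let $n\to\infty$ (Lemma~\ref{last}). Your remark that the limit passage is routine is acceptable: the exponent satisfies $k_1^2-k_m(k_m-1)/2\ge k_1(k_1+1)/2$, which supplies the quadratic decay that Lemma~\ref{last} exploits. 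So the skeleton of the plan is sound and is the paper's.

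The genuine gap is at the one place where you yourself locate the difficulty: the finite identity is never proved, and the tools you name for it would fail. Every sum that arises carries the ratio $\genfrac{[}{]}{0pt}{}{n}{k}\big/\genfrac{[}{]}{0pt}{}{n+k}{k}$, in which the summation index $k$ occurs in \emph{both} entries of the denominator binomial; neither $q$-Chu--Vandermonde nor the finite $q$-binomial theorem $\sum_{j}(-1)^jq^{j(j-1)/2}\genfrac{[}{]}{0pt}{}{N}{j}x^j=(x;q)_N$ sums expressions of this shape, so the sign $(-1)^{k_m-1}$ and the power $q^{-k_m(k_m-1)/2}$ cannot be produced the way you propose. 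What actually drives the proof is Lemma~\ref{l1}: four bespoke telescoping (WZ-pair) identities built specifically for this ratio. The trailing block $\{2\}^{s_{m+1}}$ is handled by (\ref{i1}) (base case) together with (\ref{i4}) (adding a $2$), which yield (\ref{t1(2)}); the junction/merging mechanism at each $\circ$ is powered by (\ref{i2}) combined with the geometric summation (\ref{t1(3)}). Moreover, your induction ``on $m$ alone'' does not close: the expansion that peels off the leading block writes $H^{\star}_n$ in terms of $H^{\star}_{n-1}$ with the same $m$ (and all smaller first blocks) plus $H^{\star}_n$ with $m-1$ blocks, so the argument must recurse in both parameters — the paper inducts on $n+m$ (Theorems~\ref{t2} and~\ref{t3}), with Theorem~\ref{t1} supplying the base cases. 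Until you produce summation lemmas valid for the Gaussian-binomial ratio and run this double induction, the ``delicate bookkeeping'' you defer is precisely the entire content of the theorem.
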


Note that the limiting $q\to 1$ case of (\ref{Eq16}) is Ohno-Zudilin's two-one formula from \cite{OZ:08}:
\begin{equation} \label{Eq18}
\zeta^{\star}(\{2\}^{s_1},1,\ldots,\{2\}^{s_m},1)=\sum_{{\bf p}=(2s_1+1)\circ\ldots\circ(2s_m+1)}2^{l({\bf p})}\zeta({\bf p}),
\end{equation}
where $l({\bf p})$ is the length of the string ${\bf p}.$
It is clear that $l({\bf p})=\sigma({\bf p})+1,$
where $\sigma({\bf p})$ is the number of commas in ${\bf p}=(2s_1+1)\circ\ldots\circ(2s_m+1).$

Similarly, taking the limit $q\to 1$ in (\ref{Eq17}) gives Zhao's two-one formula from~\cite[Theorem~1.2]{Zhao:13}.

{\it Remark} Note that the series on the right-hand sides of (\ref{Eq16}) and (\ref{Eq17}) contain quadratic powers of the parameter $q.$
This implies that the series converge rapidly  and therefore, formulas (\ref{Eq16}),  (\ref{Eq17}) can be used for fast calculation of the $q$-zeta star values (\ref{starz}) on strings of twos and ones.

{\it Remark.} It is easy to see that for $m=0$ and $s_{m+1}=0,$ Theorem \ref{T2} is also true. In this case, it corresponds to the trivial telescoping sum:
\begin{equation*}
1=\zeta^{\star}_q[\emptyset]=\overline{\mathfrak{z}}_q[0;0]=\sum_{k=1}^{\infty}(-1)^{k-1}(1+q^k)q^{k(k-1)/2}.
\end{equation*}

In the simplest cases $m=0$ and $m=1,$  Theorem \ref{T2}   and Theorem \ref{T1}   give $q$-analogues of the following well-known formulas
  for ordinary zeta values (see \cite{V}, \cite{Zl}, \cite[p.~292]{OW:06}):
\begin{align}
\zeta^*(\{2\}^s,1)&=2\zeta(2s+1),  \label{Eq19} \\
\zeta^*(\{2\}^s)&=2\sum_{k=1}^{\infty}\frac{(-1)^{k-1}}{k^{2s}}=2(1-2^{1-2s})\zeta(2s).  \nonumber
\end{align}

Note that (\ref{Eq19}) is a particular case of (\ref{Eq18}).
\begin{corollary}
Let $s$ be a nonnegative integer. Then
\begin{align}
\zeta_q^{\star}[\{2\}^{s}]&=\overline{\mathfrak{z}}_q[2s;s]=\sum_{k=1}^{\infty}\frac{1+q^k}{[k]_q^{2s}}\,(-1)^{k-1}q^{k(k-1)/2+sk}, \nonumber\\
\zeta_q^{\star}[\{2\}^{s}, 1]&=\hat{\mathfrak{z}}_q[2s+1;s+1]=\sum_{k=1}^{\infty}\frac{1+q^k}{[k]_q^{2s+1}}\,q^{k^2+sk}. \label{q21}
\end{align}
\end{corollary}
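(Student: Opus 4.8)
The plan is to derive both identities as the simplest specializations of the two-one formulas already established, together with a short unwinding of the definitions of $\overline{\mathfrak{z}}_q$ and $\hat{\mathfrak{z}}_q$. The first identity comes from Theorem~\ref{T2} in the case $m=0$, and the second from Theorem~\ref{T1} in the case $m=1$. In both cases the composition sum on the right-hand side contains a single block, $(2s_{m+1})$ or $(2s_1+1)$, with no preceding blocks, so there are no commas or pluses to choose and the sum collapses to one term.

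For the first identity I would take $m=0$ and $s_{m+1}=s$ in Theorem~\ref{T2}. The left-hand side of (\ref{Eq17}) becomes $\zeta_q^{\star}[\{2\}^{s}]$, while the right-hand side reduces to the single summand $\overline{\mathfrak{z}}_q[2s;s]$, with ${\bf p}=(2s)$ and $\widetilde{{\bf p}}=(s)$. Expanding the definition of $\overline{\mathfrak{z}}_q$ with one summation index $k$ (so $s_1=2s$, $t_1=s$) gives the term
\begin{equation*}
(-1)^{k-1}\,\frac{1+q^k}{[k]_q^{2s}}\,q^{\,k^2-k(k-1)/2+(s-1)k}.
\end{equation*}
It remains only to simplify the exponent: since $k^2-k(k-1)/2=k(k+1)/2$, we obtain $k(k+1)/2+(s-1)k=k(k-1)/2+sk$, which is exactly the exponent displayed in the statement. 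The boundary case $s=0$ is not covered by Theorem~\ref{T2} directly, but it is precisely the telescoping identity recorded in the Remark following Theorem~\ref{T2}, so it is already available.

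For the second identity I would take $m=1$ and $s_1=s$ in Theorem~\ref{T1}. The left-hand side of (\ref{Eq16}) becomes $\zeta_q^{\star}[\{2\}^{s},1]$, and the right-hand side again has a single block, giving $\hat{\mathfrak{z}}_q[2s+1;s+1]$ with ${\bf p}=(2s+1)$ and $\widetilde{{\bf p}}=(s+1)$. Here the definition of $\hat{\mathfrak{z}}_q$ uses the empty inner factor $\mathcal{H}_{k-1}[\emptyset;\emptyset]=1$, so with $s_1=2s+1$ and $t_1=s+1$ the series is
\begin{equation*}
\sum_{k=1}^{\infty}\frac{q^{\,k^2+(s+1-1)k}(1+q^k)}{[k]_q^{2s+1}}=\sum_{k=1}^{\infty}\frac{1+q^k}{[k]_q^{2s+1}}\,q^{\,k^2+sk},
\end{equation*}
which is the asserted expression with no further manipulation.

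The only point that genuinely needs verification is therefore the exponent arithmetic in the first identity; there is no real obstacle here, since everything else is a direct substitution into results proved earlier. I would also note that the rapid convergence from the quadratic exponents, remarked upon after Theorem~\ref{T2}, guarantees that all the single series written above converge for $0<q<1$, so these specializations are legitimate.
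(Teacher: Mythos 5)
Your proposal is correct and follows exactly the route the paper intends: the first identity is the $m=0$, $s_{m+1}=s$ case of Theorem~\ref{T2} (with the $s=0$ boundary case supplied by the telescoping Remark, which you rightly invoke), and the second is the $m=1$, $s_1=s$ case of Theorem~\ref{T1}, each composition sum collapsing to a single term. Your exponent simplification $k^2-k(k-1)/2+(s-1)k=k(k-1)/2+sk$ is also correct, so nothing is missing.
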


For $s=1,$ from formula (\ref{q21}) we get a new $q$-analogue of Euler's formula (\ref{zeta21}) that becomes
\begin{equation} \label{qz3}
\zeta_q^{\star}[2, 1]=\sum_{k=1}^{\infty}\frac{1+q^k}{[k]_q^{3}}\,q^{k(k+1)}.
\end{equation}
The detailed survey on various generalizations and proofs of formula (\ref{zeta21}) can be found in~\cite{BB:06}.
Note that no extension of Ap\'ery's proof leading to the irrationality of a $q$-analogue of $\zeta(3)$ is known. In this respect,
formula (\ref{qz3}) may be quite helpful: firstly, in view of the fast convergence of the series (\ref{qz3}) and secondly, because of the fact that the irrationality proof of $\zeta(3)$ as a double series, $\zeta(2,1),$ is known (see \cite{So:98}, \cite{So:02}).

\section{$q$-binomial identities}

In this section, we prove some auxiliary $q$-binomial identities, which are $q$-analogues of those proved in \cite[Lemma 2.1]{HP:12}.

\begin{lemma} \label{l1}
For integers $n\ge 1,$ $l\ge 0$ we have
\begin{align}
\sum_{k=l+1}^n(1+q^k)\frac{\genfrac{[}{]}{0pt}{}{n}{k}}{\genfrac{[}{]}{0pt}{}{n+k}{k}}(-1)^{k}
q^{k(k-1)/2}&=\frac{[l]_q-[n]_q}{[n]_q}\frac{\genfrac{[}{]}{0pt}{}{n}{l}}{\genfrac{[}{]}{0pt}{}{n+l}{l}}(-1)^{l}
q^{l(l-1)/2}, \label{i1}\\
\sum_{k=l+1}^n(1+q^k)\,\frac{[k]_q\genfrac{[}{]}{0pt}{}{n}{k}}{\genfrac{[}{]}{0pt}{}{n+k}{k}}
q^{k(k-1)}&=([n]_q-[l]_q)\frac{\genfrac{[}{]}{0pt}{}{n}{l}}{\genfrac{[}{]}{0pt}{}{n+l}{l}}\,
q^{l^2}, \label{i2}\\
\sum_{k=1}^n\frac{1+q^k}{[k]_q}\,\frac{\genfrac{[}{]}{0pt}{}{n}{k}}{\genfrac{[}{]}{0pt}{}{n+k}{k}}\,
q^{k^2}&=\sum_{m=1}^n\frac{q^m}{[m]_q}. \label{i3}
\end{align}
Moreover, if $l\ge 1$ then
\begin{equation} \label{i4}
\sum_{k=l}^n\frac{q^k}{[k]_q^2}\,\frac{\genfrac{[}{]}{0pt}{}{k}{l}}{\genfrac{[}{]}{0pt}{}{k+l}{l}}=
\frac{q^l\genfrac{[}{]}{0pt}{}{n}{l}}{[l]_q^2\genfrac{[}{]}{0pt}{}{n+l}{l}}.
\end{equation}
\end{lemma}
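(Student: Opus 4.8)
The plan is to treat all four identities by telescoping, after a single normalising observation: for every ratio of the $q$-binomials that appears one has
\begin{equation*}
\frac{\genfrac{[}{]}{0pt}{}{a}{b}}{\genfrac{[}{]}{0pt}{}{a+b}{b}}=\frac{(q)_a^2}{(q)_{a-b}(q)_{a+b}},
\end{equation*}
which turns each summand into a symmetric rational expression in $q$-Pochhammer symbols. For \eqref{i1} and \eqref{i2} I would view the right-hand side as a function $R(l)$ of the lower summation limit, note that it vanishes at $l=n$ (the prefactor $[l]_q-[n]_q$ kills it), and verify that the summand equals the backward difference $R(k-1)-R(k)$; then $\sum_{k=l+1}^n\bigl(R(k-1)-R(k)\bigr)=R(l)-R(n)=R(l)$. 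The verification of $a_k=R(k-1)-R(k)$ reduces, after clearing Pochhammers via $\frac{(q)_{n-k}(q)_{n+k}}{(q)_{n-k+1}(q)_{n+k-1}}=\frac{1-q^{n+k}}{1-q^{n-k+1}}$ and rewriting $[m]_q(1-q)=1-q^m$, to a Laurent-polynomial identity in $q$ that one checks by expanding both sides into monomials $q^{\,in+jk}$; the powers match because the shift $k\mapsto k-1$ sends $q^{k(k-1)/2}$ to $q^{(k-1)(k-2)/2}=q^{\,k(k-1)/2-(k-1)}$ (and similarly $q^{k(k-1)}$, $q^{l^2}$), exactly absorbing the factor $1+q^k$.

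Identity \eqref{i4} I would prove by the same device, but telescoping in the upper limit: writing the right-hand side as $R_4(n)=\frac{q^l}{[l]_q^2}\frac{(q)_n^2}{(q)_{n-l}(q)_{n+l}}$, one has $R_4(l-1)=0$, and it suffices to show that the summand equals the forward difference $R_4(k)-R_4(k-1)$. The single algebraic fact driving this is
\begin{equation*}
(1-q^k)^2-(1-q^{k-l})(1-q^{k+l})=q^{k-l}(1-q^l)^2,
\end{equation*}
after which the surviving factors collapse: $\frac{(1-q^l)^2}{[l]_q^2}=(1-q)^2$, the $q$-powers combine to $q^k$, and $\frac{1}{(1-q^k)^2}=\frac{1}{(1-q)^2[k]_q^2}$, reproducing $\frac{q^k}{[k]_q^2}\frac{(q)_k^2}{(q)_{k-l}(q)_{k+l}}$ precisely.

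Identity \eqref{i3} is the only one whose right-hand side is not a closed form, so a plain telescoping over $k$ fails and I would argue by induction on $n$. The base case $n=1$ is immediate ($q=q$). For the step, denoting the left-hand side by $L(n)$, the increment $L(n)-L(n-1)$ simplifies through the companion identity $(1-q^n)^2-(1-q^{n-k})(1-q^{n+k})=q^{n-k}(1-q^k)^2$ and the collapse $\frac{1+q^k}{[k]_q}(1-q^k)^2=(1-q)(1-q^{2k})$ to a constant multiple of the auxiliary sum $S=\sum_{k=1}^{n}\frac{(1-q^{2k})\,q^{k^2-k}}{(q)_{n-k}(q)_{n+k}}$. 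This sum telescopes with antidifference $T(k)=\frac{q^{k^2-k}}{(q)_{n-k}(q)_{n+k-1}}$, i.e. $T(k)-T(k+1)$ equals the $k$-th term (the relation $(1-q^{n+k})-q^{2k}(1-q^{n-k})=1-q^{2k}$ makes this work with constant coefficient $1$), giving $S=T(1)=\frac{1}{(q)_{n-1}(q)_n}=\frac{1-q^n}{(q)_n^2}$. Substituting back yields $L(n)-L(n-1)=\frac{q^n}{[n]_q}$, closing the induction.

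No deep obstacle is expected; all four are telescoping statements. The genuinely fiddly part is the exponent bookkeeping for the half-integer and quadratic powers $q^{k(k-1)/2},\,q^{k(k-1)},\,q^{k^2},\,q^{k^2-k}$ under the index shift, where every mismatch must be absorbed by the factor $1+q^k$ or $1-q^{2k}$. The one place calling for a small idea is \eqref{i3}: recognising that the inductive increment is controlled by the auxiliary sum $S$ and guessing the correct antidifference $\frac{q^{k^2-k}}{(q)_{n-k}(q)_{n+k-1}}$ (equivalently, that the telescoping coefficient is constant).
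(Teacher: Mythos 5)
Your proposal is correct, and it is essentially the paper's own argument: all four identities are handled by telescoping with the same certificates, since for \eqref{i1}, \eqref{i2}, \eqref{i4} your backward differences $R(k-1)-R(k)$ (resp.\ $R_4(k)-R_4(k-1)$) coincide, up to an index shift and a factor independent of $k$, with the paper's functions $G(n,k)$ and $G(l,k)$, and the driving algebraic fact $(1-q^k)^2-(1-q^{k-l})(1-q^{k+l})=q^{k-l}(1-q^l)^2$ is identical. For \eqref{i3}, where the paper sums the WZ-type relation $F(m,k)-F(m+1,k)=G(m,k+1)-G(m,k)$ over both indices, your induction on $n$ is the same relation summed in the opposite order: your antidifference $T(k)=q^{k^2-k}/\bigl((q)_{n-k}(q)_{n+k-1}\bigr)$ equals the paper's $G(n-1,k)$ up to the constant factor $q^n(q)_{n-1}^2$, so the two proofs have the same computational content.
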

\begin{proof}
It is easy to show that if we put
$$
F(n,k)=(1+q^k)\,\frac{\genfrac{[}{]}{0pt}{}{n}{k}}{\genfrac{[}{]}{0pt}{}{n+k}{k}}\,(-1)^{k-1}
q^{k(k-1)/2}, \qquad G(n,k)=\frac{q^{n+k}-1}{q^k+1}\,F(n,k),
$$
then
\begin{equation}
\label{i11}
(1-q^n)F(n,k)=G(n,k+1)-G(n,k)
\end{equation}
for all positive integers $n, k.$ Summing (\ref{i11}) over $k$ from $l+1$ to $n$ we obtain
\begin{equation*}
\begin{split}
(1-q^n)&\sum_{k=l+1}^nF(n,k)=\sum_{k=l+1}^n(G(n,k+1)-G(n,k))=G(n,n+1)-G(n,l+1) \\
&=-G(n,l+1)
=
\frac{1-q^{n+l+1}}{1+q^{l+1}}\,F(n,l+1)=(1-q^{n-l})\,\frac{\genfrac{[}{]}{0pt}{}{n}{l}}{\genfrac{[}{]}{0pt}{}{n+l}{l}}\,(-1)^{l}
q^{l(l+1)/2},
\end{split}
\end{equation*}
which implies (\ref{i1}).

Similarly, putting
$$
F(n,k)=\frac{\genfrac{[}{]}{0pt}{}{n}{k}}{\genfrac{[}{]}{0pt}{}{n+k}{k}}\,(1-q^{2k})q^{k(k-1)}, \qquad
G(n,k)=\frac{1-q^{n+k}}{1-q^{2k}}\,F(n,k)
$$
we conclude that
\begin{equation} \label{i21}
F(n,k)=G(n,k)-G(n,k+1)
\end{equation}
for all positive integers $n, k.$ Summing both sides of (\ref{i21}) over $k$ from $l+1$ to $n,$ we have
\begin{equation*}
\sum_{k=l+1}^nF(n,k)=
G(n,l+1)-G(n,n+1)=G(n,l+1)=
\frac{\genfrac{[}{]}{0pt}{}{n-1}{l}}{\genfrac{[}{]}{0pt}{}{n+l}{l}}\,(1-q^n)q^{l(l+1)},
\end{equation*}
and the identity (\ref{i2}) follows.

For proving (\ref{i3}), we define for integers $m\ge 0,$ $k\ge 1$ two functions:
$$
F(m,k)=\frac{\genfrac{[}{]}{0pt}{}{m}{k}}{\genfrac{[}{]}{0pt}{}{m+k}{k}}\,\frac{1+q^k}{1-q^k}\,\,q^{k^2}, \qquad
G(m,k)=\frac{q^{m-k+1}}{1+q^k}\,\frac{1-q^k}{1-q^{m-k+1}}\,F(m,k).
$$
Then it is readily seen that
\begin{equation} \label{i31}
F(m,k)-F(m+1,k)=G(m,k+1)-G(m,k).
\end{equation}
Summing both sides of (\ref{i31}) over $m$ from $0$ to $n-1$ we get
\begin{equation} \label{i32}
\sum_{m=0}^{n-1}(G(m,k+1)-G(m,k))=\sum_{m=0}^{n-1}(F(m,k)-F(m+1,k))=F(0,k)-F(n,k)=-F(n,k).
\end{equation}
Summing once again both sides of (\ref{i32}) over $k$ from $1$ to $n,$ we easily obtain
\begin{equation*}
\begin{split}
\sum_{k=1}^nF(n,k)&=\sum_{m=0}^{n-1}\sum_{k=1}^n(G(m,k)-G(m,k+1)) \\
&=\sum_{m=0}^{n-1}(G(m,1)-G(m,n+1)) \\
&=\sum_{m=0}^{n-1}G(m,1)=\sum_{m=0}^{n-1}\frac{q^{m+1}}{1-q^{m+1}},
\end{split}
\end{equation*}
which implies (\ref{i3}).

To prove (\ref{i4}), we put
$$
F(l,k)=\frac{q^{k-l}}{[k]_q^2}\,\frac{\genfrac{[}{]}{0pt}{}{k}{l}}{\genfrac{[}{]}{0pt}{}{k+l}{k}}, \qquad
G(l,k)=\frac{(1-q^{k-l})(1-q^{k+l})}{q^{k-l}}\,F(l,k).
$$
Then it is easy to see that
\begin{equation} \label{i41}
(1-q^l)^2F(l,k)=G(l,k+1)-G(l,k).
\end{equation}
Summing (\ref{i41}) over $k$ from $l$ to $n$ we get
\begin{equation*}
\begin{split}
(1-q^l)^2\sum_{k=l}^nF(l,k)&=\sum_{k=l}^n(G(l,k+1)-G(l,k))=G(l,n+1)-G(l,l) \\
&=G(l,n+1)=(1-q)^2 \frac{\genfrac{[}{]}{0pt}{}{n}{l}}{\genfrac{[}{]}{0pt}{}{n+l}{l}},
\end{split}
\end{equation*}
and the identity (\ref{i4}) follows.
\end{proof}

\section{Identities for multiple harmonic sums}

In this section, we prove $q$-binomial identities for multiple harmonic sums whose indices are strings of twos and ones. We begin with some special
cases and then extend them to arbitrary strings of twos and ones.

\begin{theorem} \label{t1}
Let $a, b$ be integers satisfying $a\ge 0,$ $b\ge 1.$ Then for any positive integer~$n,$
\begin{equation}
H^{\star}_n[\{2\}^a]=\sum_{k=1}^n\frac{1+q^k}{[k]_q^{2a}}\frac{\genfrac{[}{]}{0pt}{}{n}{k}}{\genfrac{[}{]}{0pt}{}{n+k}{k}}
(-1)^{k-1}q^{k(k-1)/2+ak},
\label{t1(2)}
\end{equation}
\begin{equation}
H^{\star}_n[\{2\}^a,1]=\sum_{k=1}^n\frac{1+q^k}{[k]_q^{2a+1}}\,\frac{\genfrac{[}{]}{0pt}{}{n}{k}}{\genfrac{[}{]}{0pt}{}{n+k}{k}}\,\,
q^{k^2+ak},
\label{t2(1)}
\end{equation}
\begin{equation}
\begin{split}
H^{\star}_n[\{2\}^a,1,\{2\}^b]&=-\sum_{k=1}^n
\frac{1+q^k}{[k]_q^{2(a+b)+1}}\frac{\genfrac{[}{]}{0pt}{}{n}{k}}{\genfrac{[}{]}{0pt}{}{n+k}{k}}(-1)^{k}
q^{k(k+1)/2+(a+b)k} \\
&-\sum_{k=1}^n\frac{1+q^k}{[k]_q^{2a+1}}\frac{\genfrac{[}{]}{0pt}{}{n}{k}}{\genfrac{[}{]}{0pt}{}{n+k}{k}}
q^{k^2+ak}\sum_{j=1}^{k-1}\frac{(-1)^j(1+q^j)q^{bj-j(j+1)/2}}{[j]_q^{2b}}.
\label{t2(2)}
\end{split}
\end{equation}
\end{theorem}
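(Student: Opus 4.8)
The plan is to prove all three identities simultaneously by induction on $a$, reducing each one to its base case $a=0$ via a single recurrence together with the auxiliary identity (\ref{i4}); the base cases of (\ref{t1(2)}) and (\ref{t2(1)}) fall out of Lemma \ref{l1} directly, and the base case $a=0$ of (\ref{t2(2)}) is the only genuinely hard point. I would prove (\ref{t1(2)}) in full first, then (\ref{t2(1)}), and use them when settling (\ref{t2(2)}).

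For the inductive step, for any tail string $\mathbf{w}$ of twos and ones and any $a\ge 1$, peeling off the outermost summation variable $k_1$ gives
\[
H^{\star}_n[\{2\}^a,\mathbf{w}]=\sum_{m=1}^n\frac{q^m}{[m]_q^2}\,H^{\star}_m[\{2\}^{a-1},\mathbf{w}].
\]
Substituting the $(a-1)$ closed form, interchanging the order of summation, and applying (\ref{i4}) in the shape $\sum_{m=k}^n\frac{q^m}{[m]_q^2}\frac{\genfrac{[}{]}{0pt}{}{m}{k}}{\genfrac{[}{]}{0pt}{}{m+k}{k}}=\frac{q^k\genfrac{[}{]}{0pt}{}{n}{k}}{[k]_q^2\genfrac{[}{]}{0pt}{}{n+k}{k}}$ collapses the inner sum: it replaces $\frac{\genfrac{[}{]}{0pt}{}{m}{k}}{\genfrac{[}{]}{0pt}{}{m+k}{k}}$ by $\frac{\genfrac{[}{]}{0pt}{}{n}{k}}{\genfrac{[}{]}{0pt}{}{n+k}{k}}$, raises the power of $[k]_q$ in the denominator by two, and contributes an extra factor $q^k$. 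This is exactly the transition from the $a-1$ formula to the $a$ formula in each of (\ref{t1(2)}), (\ref{t2(1)}), (\ref{t2(2)}) — note that the inner $j$-sum in (\ref{t2(2)}) is untouched, since it depends on neither $m$ nor $n$. Thus the step is uniform, and only $a=0$ remains. The base cases of the first two are immediate: for (\ref{t1(2)}) at $a=0$ the right-hand side is $\sum_{k=1}^n(1+q^k)\frac{\genfrac{[}{]}{0pt}{}{n}{k}}{\genfrac{[}{]}{0pt}{}{n+k}{k}}(-1)^{k-1}q^{k(k-1)/2}$, which equals $H^{\star}_n[\emptyset]=1$ by (\ref{i1}) with $l=0$, while for (\ref{t2(1)}) at $a=0$ the right-hand side is precisely identity (\ref{i3}), whose value $\sum_{m=1}^n q^m/[m]_q$ is $H^{\star}_n[1]$.

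The crux is the $a=0$ case of (\ref{t2(2)}), the closed form for $H^{\star}_n[1,\{2\}^b]$. Peeling off the leading $1$ gives $H^{\star}_n[1,\{2\}^b]=\sum_{j=1}^n\frac{q^j}{[j]_q}H^{\star}_j[\{2\}^b]$; inserting the already-proven formula (\ref{t1(2)}) for $H^{\star}_j[\{2\}^b]$ and interchanging summation reduces everything to evaluating
\[
S(n,k)=\sum_{j=k}^n\frac{q^j}{[j]_q}\,\frac{\genfrac{[}{]}{0pt}{}{j}{k}}{\genfrac{[}{]}{0pt}{}{j+k}{k}}.
\]
The main obstacle is that $S(n,k)$ carries only the first power of $[j]_q$, so the clean telescoping (\ref{i4}) does not apply and $S(n,k)$ does not collapse to a single $q$-hypergeometric term; this irreducibility is exactly what forces the two-part shape of (\ref{t2(2)}).

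To evaluate $S(n,k)$ I would run a dedicated creative-telescoping argument modeled on the proof of (\ref{i4}) but with a non-vanishing defect, or equivalently a summation by parts writing $q^j/[j]_q=[j]_q\cdot q^j/[j]_q^2$ and summing against the partial sums furnished by (\ref{i4}). The boundary contribution then produces the single sum (the first term of (\ref{t2(2)})), while the residual, after reindexing the resulting double sum over $k$ and the auxiliary variable, reproduces the second term; here I would exploit the structural coincidence that the prefactor $\frac{1+q^k}{[k]_q}\frac{\genfrac{[}{]}{0pt}{}{n}{k}}{\genfrac{[}{]}{0pt}{}{n+k}{k}}q^{k^2}$ of the second term is exactly the summand of (\ref{t2(1)}) at $a=0$, and that the inner weight is a $q$-binomial-free dual of the (\ref{t1(2)}) summand. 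I expect the delicate part to be bookkeeping the signs and the quadratic $q$-exponents ($q^{k(k\pm1)/2}$, $q^{k^2}$, $q^{bj-j(j+1)/2}$) through this reorganization so that the two surviving sums match (\ref{t2(2)}) exactly.
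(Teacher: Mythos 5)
Your treatment of (\ref{t1(2)}) and (\ref{t2(1)}) coincides with the paper's own proof (induction on $a$ via the recurrence $H^{\star}_n[\{2\}^a,\mathbf{w}]=\sum_{m=1}^n\frac{q^m}{[m]_q^2}H^{\star}_m[\{2\}^{a-1},\mathbf{w}]$ and identity (\ref{i4}), with base cases (\ref{i1}) and (\ref{i3})), and your reduction of (\ref{t2(2)}) to its base case $a=0$ by the same peeling is correct --- the exponents do track, and it is in fact a tidier frame than the paper's, which instead proves (\ref{t2(2)}) for all $a$ at once by induction on $n$. The problem is that the crux you correctly isolate, the case $H^{\star}_n[1,\{2\}^b]$, is never actually proved: you describe a hoped-for mechanism (``boundary term gives the first sum, residual gives the second'') and this mechanism, as described, does not work. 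Summation by parts against the partial sums supplied by (\ref{i4}) gives
\begin{equation*}
S(n,k)=\sum_{j=k}^n\frac{q^j}{[j]_q}\,\frac{\genfrac{[}{]}{0pt}{}{j}{k}}{\genfrac{[}{]}{0pt}{}{j+k}{k}}
=\frac{[n]_q\,q^k}{[k]_q^2}\,\frac{\genfrac{[}{]}{0pt}{}{n}{k}}{\genfrac{[}{]}{0pt}{}{n+k}{k}}
-\frac{q^k}{[k]_q^2}\sum_{j=k}^{n-1}q^j\,\frac{\genfrac{[}{]}{0pt}{}{j}{k}}{\genfrac{[}{]}{0pt}{}{j+k}{k}},
\end{equation*}
and neither piece is in final form. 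The boundary term contributes summands carrying $[n]_q/[k]_q^{2b+2}$, whereas the first sum of (\ref{t2(2)}) at $a=0$ requires $1/[k]_q^{2b+1}$, so it does \emph{not} reproduce that sum; and the leftover sum $\sum_j q^j\genfrac{[}{]}{0pt}{}{j}{k}\big/\genfrac{[}{]}{0pt}{}{j+k}{k}$ admits no closed form at all (already at $q\to1$, $k=1$ it equals $n-H_n$, a harmonic number), with the binomial weight attached to the \emph{inner} index $j$, while in the target's second term the weight $\genfrac{[}{]}{0pt}{}{n}{k}\big/\genfrac{[}{]}{0pt}{}{n+k}{k}$ sits on the \emph{outer} index and the inner sum is binomial-free. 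Converting one configuration into the other is the actual content of the identity; it is not sign-and-exponent bookkeeping.

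The ingredient your plan never invokes is identity (\ref{i2}), and that is precisely what is needed. The paper's proof of (\ref{t2(2)}) runs induction on $n$, expanding over the multiplicity of the largest index,
\begin{equation*}
H^{\star}_n[\{2\}^a,1,\{2\}^b]=\sum_{l=0}^a\frac{q^{n(a-l)}}{[n]_q^{2(a-l)}}\,H^{\star}_{n-1}[\{2\}^l,1,\{2\}^b]
+\frac{q^{(a+1)n}}{[n]_q^{2a+1}}\,H^{\star}_n[\{2\}^b],
\end{equation*}
summing over $l$ by the algebraic identity (\ref{t1(3)}), and then resolving exactly the outer/inner mismatch above by means of (\ref{i2}) in the form (\ref{t2(4)}): in the paper's notation,
\begin{equation*}
\sum_{k=1}^nA_{n,k}q^{k(k-1)/2}[k]_q\,V_{k-1}[2b]
=[n]_q\sum_{j=1}^n\frac{(-1)^jA_{n,j}q^{bj}}{[j]_q^{2b}}-\sum_{j=1}^n\frac{(-1)^jA_{n,j}q^{bj}}{[j]_q^{2b-1}},
\end{equation*}
which is the exchange step your sketch presupposes but does not supply. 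To complete your argument you would need either to prove such a lemma and redo the matching honestly, or to run the paper's induction on $n$ at $a=0$; as it stands, the heart of (\ref{t2(2)}) is asserted rather than proved.
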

\begin{proof}
We show that (\ref{t1(2)}) is true by induction on $a.$ For $a=0$ the equality follows from~(\ref{i1}).
Suppose that the formula is true for $a>0.$ Then by the induction assumption  and identity~(\ref{i4}) we easily conclude that
\begin{equation*}
\begin{split}
H^{\star}_n[\{2\}^{a+1}]&=\sum_{k=1}^n\frac{q^k}{[k]_q^2}\,H^{\star}_k[\{2\}^a]=\sum_{k=1}^n\frac{q^k}{[k]_q^2}\sum_{l=1}^k
\frac{1+q^l}{[l]_q^{2a}}\,\frac{\genfrac{[}{]}{0pt}{}{k}{l}}{\genfrac{[}{]}{0pt}{}{k+l}{l}}\,(-1)^{l-1}
q^{l(l-1)/2+al} \\[3pt]
&=\sum_{l=1}^n\frac{1+q^l}{[l]_q^{2a}}(-1)^{l-1}q^{l(l-1)/2+al}\sum_{k=l}^n\frac{q^k}{[k]_q^2}\,
\frac{\genfrac{[}{]}{0pt}{}{k}{l}}{\genfrac{[}{]}{0pt}{}{k+l}{l}} \\[3pt]
&=\sum_{l=1}^n\frac{1+q^l}{[l]_q^{2a+2}}\,
\frac{\genfrac{[}{]}{0pt}{}{n}{l}}{\genfrac{[}{]}{0pt}{}{n+l}{l}}\,
(-1)^{l-1}
q^{l(l-1)/2+(a+1)l}
\end{split}
\end{equation*}
and the formula is proved.

We prove the second identity also by induction on $a.$ For $a=0$ its validity follows from (\ref{i3}).
Assume the formula holds for $a>0.$ Then by the induction assumption  and formula~(\ref{i4}), we easily obtain
\begin{equation*}
\begin{split}
H^{\star}_n[\{2\}^{a+1},1]&=\sum_{k=1}^n\frac{q^k H^{\star}_k[\{2\}^a,1]}{[k]_q^2}=\sum_{k=1}^n\frac{q^k}{[k]_q^2}\sum_{l=1}^k
\frac{1+q^l}{[l]_q^{2a+1}}\frac{\genfrac{[}{]}{0pt}{}{k}{l}}{\genfrac{[}{]}{0pt}{}{k+l}{l}}\,
q^{l^2+al} \\[3pt]
&=\sum_{l=1}^n\frac{1+q^l}{[l]_q^{2a+1}}\,q^{l^2+al}\sum_{k=l}^n\frac{q^k}{[k]_q^2}
\frac{\genfrac{[}{]}{0pt}{}{k}{l}}{\genfrac{[}{]}{0pt}{}{k+l}{l}}
=\sum_{l=1}^n\frac{1+q^l}{[l]_q^{2a+3}}\frac{\genfrac{[}{]}{0pt}{}{n}{l}}{\genfrac{[}{]}{0pt}{}{n+l}{l}}\,
q^{l^2+(a+1)l},
\end{split}
\end{equation*}
as required.

Finally, to prove (\ref{t2(2)}), we  rewrite it in the form
\begin{equation*}
H^{\star}_n[\{2\}^a,1,\{2\}^b]=-\sum_{k=1}^n\frac{(-1)^kA_{n,k}q^{(a+b+1)k}}{[k]_q^{2(a+b)+1}}-\sum_{k=1}^n
\frac{A_{n,k}q^{k(k+1)/2+ak}}{[k]_q^{2a+1}}\,V_{k-1}(2b),
\end{equation*}
where
\begin{equation*}
A_{n,k}=(1+q^k)\frac{\genfrac{[}{]}{0pt}{}{n}{k}}{\genfrac{[}{]}{0pt}{}{n+k}{k}}\,q^{k(k-1)/2}, \quad \text{and}\quad
V_k(2s)=\sum_{j=1}^k\frac{(-1)^j(1+q^j)q^{sj-j(j+1)/2}}{[j]_q^{2s}},
\end{equation*}
and
proceed by induction on $n.$ For $n=1$ the formula is true, since $H^{\star}_1[\{2\}^a,1,\{2\}^b]=q^{a+b+1}.$ For $n>1$ we
use the equality
$$
H^{\star}_n[\{2\}^a,1,\{2\}^b]=\sum_{l=0}^a\frac{q^{n(a-l)}}{[n]_q^{2(a-l)}}\,H^{\star}_{n-1}[\{2\}^l,1,\{2\}^b]
+\frac{q^{(a+1)n}}{[n]_q^{2a+1}}\,H^{\star}_n[\{2\}^b]
$$
and apply the induction assumption  and formula (\ref{t1(2)})  to get
\begin{equation*}
\begin{split}
&H^{\star}_n[\{2\}^a,1,\{2\}^b]=-\sum_{l=0}^a\frac{q^{n(a-l)}}{[n]_q^{2(a-l)}}\sum_{k=1}^{n-1}\frac{(-1)^kA_{n-1,k}\,q^{(b+l+1)k}}{[k]_q^{2(b+l)+1}} \\[5pt]
&\quad -\sum_{l=0}^a\frac{q^{n(a-l)}}{[n]_q^{2(a-l)}}\sum_{k=1}^{n-1}\frac{A_{n-1,k}\,q^{k(k+1)/2+lk}\,V_{k-1}[2b]}{[k]_q^{2l+1}}
-\frac{q^{(a+1)n}}{[n]_q^{2a+1}}\sum_{k=1}^n\frac{(-1)^kA_{n,k}\,q^{bk}}{[k]_q^{2b}}.
\end{split}
\end{equation*}
Changing the order of summation, summing over $l,$ and noting that
\begin{equation} \label{t1(3)}
A_{n-1,k}\sum_{l=0}^a\frac{[n]_q^{2l}}{[k]_q^{2l}}\,q^{(k-n)l}=
A_{n,k}\left(\frac{[n]_q^{2a}}{[k]_q^{2a}}\,q^{(k-n)a}-\frac{[k]_q^2}{[n]_q^2}\,q^{n-k}\right)
\end{equation}
 we obtain
\begin{equation} \label{t2(3)}
\begin{split}
H^{\star}_n[\{2\}^a,1,\{2\}^b]&=-\sum_{k=1}^n\frac{(-1)^kA_{n,k}\,q^{(a+b+1)k}}{[k]_q^{2(a+b)+1}}-\sum_{k=1}^n
\frac{A_{n,k}\,q^{k(k+1)/2+ak}\,V_{k-1}[2b]}{[k]_q^{2a+1}} \\[3pt]
&\quad
+\frac{q^{(a+1)n}}{[n]_q^{2a+2}}\sum_{k=1}^nA_{n,k}\,q^{k(k-1)/2}\,[k]_qV_{k-1}[2b] \\[3pt]
&\quad +\frac{q^{(a+1)n}}{[n]_q^{2a+2}}\sum_{k=1}^n\frac{(-1)^kA_{n,k}\,q^{bk}}{[k]_q^{2b-1}}
-\frac{q^{(a+1)n}}{[n]_q^{2a+1}}\sum_{k=1}^n\frac{(-1)^kA_{n,k}\,q^{bk}}{[k]_q^{2b}}.
\end{split}
\end{equation}
Noting that by (\ref{i2}),
$$
(1+q^j)\sum_{k=j+1}^nA_{n,k}q^{k(k-1)/2}\,[k]_q=A_{n,j}([n]_q-[j]_q)q^{j(j+1)/2},
$$
we can simplify  the double  sum on the right-hand side of (\ref{t2(3)}) to get
\begin{equation} \label{t2(4)}
\begin{split}
&\sum_{k=1}^nA_{n,k}q^{k(k-1)/2}\,[k]_qV_{k-1}[2b]=\sum_{j=1}^n\frac{(-1)^j(1+q^j)q^{bj-j(j+1)/2}}{[j]_q^{2b}}
\sum_{k=j+1}^nA_{n,k}q^{k(k-1)/2}\,[k]_q \\[3pt]
&\qquad =\sum_{j=1}^n\frac{(-1)^jA_{n,j}\,q^{bj}([n]_q-[j]_q)}{[j]_q^{2b}}
=[n]_q\sum_{j=1}^n\frac{(-1)^jA_{n,j}\,q^{bj}}{[j]_q^{2b}}-\sum_{j=1}^n\frac{(-1)^jA_{n,j}\,q^{bj}}{[j]_q^{2b-1}}.
\end{split}
\end{equation}
Now from (\ref{t2(3)}) and (\ref{t2(4)}) we conclude the proof.
\end{proof}

The next two theorems generalize identities (\ref{t1(2)})--(\ref{t2(2)}) to strings of arbitrary collections of twos and ones.

Let $m, n$ be nonnegative integers and ${\bf s}=(s_1, \ldots, s_m)\in {\mathbb Z}^m,$ ${\bf t}=(t_1, \ldots, t_m)\in {\mathbb Z}^m.$
Define the multiple nested sum
\begin{equation*}
\begin{split}
\widehat{\mathcal{H}}_n[{\bf s}; {\bf t}]&=\widehat{\mathcal{H}}_n[s_1,\ldots,s_m;t_1,\ldots,t_m] \\[3pt]
&=\sum_{k=1}^n\frac{\genfrac{[}{]}{0pt}{}{n}{k}}{\genfrac{[}{]}{0pt}{}{n+k}{k}}
\frac{q^{k^2+(t_1-1)k}(1+q^k)}{[k]_q^{s_1}}\,\mathcal{H}_{k-1}[s_2,\ldots,s_m;
t_2,\ldots,t_m],
\end{split}
\end{equation*}
and put $\widehat{\mathcal{H}}_n[{\bf s}; {\bf t}]=0$ if $n<m,$ and
$\widehat{\mathcal{H}}_n[\emptyset]=1$ for all $n\ge 0$ and $m=0.$

\begin{theorem} \label{t2}
Let $m\in {\mathbb N},$ $s_1, \ldots, s_m\in {\mathbb N}_0.$ Then for any positive integer $n,$
\begin{equation} \label{t2.1}
H^{\star}_n[\{2\}^{s_1}, 1, \{2\}^{s_2}, 1, \ldots, \{2\}^{s_m},1]=\sum_{{\bf p}=(2s_1+1)\circ(2s_2+1)\circ\ldots\circ(2s_m+1)}
\widehat{\mathcal{H}}_n[{\bf p}; {\bf \widetilde{p}}],
\end{equation}
where $\circ$ is either comma or plus, and the string ${\bf \widetilde{p}}:=(s_1+1)\circ(s_2+1)\circ\ldots\circ(s_m+1)$
is associated with the string ${\bf p}.$ This means that the choice of commas and pluses in ${\bf p}=(2s_1+1)\circ(2s_2+1)\circ\ldots\circ(2s_m+1)$ and ${\bf \widetilde{p}}=(s_1+1)\circ(s_2+1)\circ\ldots\circ(s_m+1)$ is the same.
\end{theorem}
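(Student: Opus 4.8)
The plan is to argue by induction on the number of blocks $m$, with an inner induction on $n$. The base case $m=1$ is exactly (\ref{t2(1)}): for a single block the only composition is ${\bf p}=(2s_1+1)$ with $\widetilde{\bf p}=(s_1+1)$, and $\widehat{\mathcal{H}}_n[2s_1+1;s_1+1]$ is literally the right-hand side of (\ref{t2(1)}). The engine of the induction is the decomposition of the star sum according to how many of the leading indices attached to the first block $\{2\}^{s_1},1$ are equal to $n$. Writing $T=\{2\}^{s_2},1,\ldots,\{2\}^{s_m},1$ for the string with the first block removed, this yields the recursion
\begin{equation*}
H^{\star}_n[\{2\}^{s_1},1,T]=\sum_{l=0}^{s_1}\frac{q^{n(s_1-l)}}{[n]_q^{2(s_1-l)}}\,H^{\star}_{n-1}[\{2\}^{l},1,T]+\frac{q^{(s_1+1)n}}{[n]_q^{2s_1+1}}\,H^{\star}_n[T],
\end{equation*}
the exact analogue of the recursion used to establish (\ref{t2(2)}): the sum over $l$ records the case $k_1=\cdots=k_{s_1-l}=n>k_{s_1-l+1}$, while the last term records the case in which all $s_1$ twos together with the following one equal $n$ (so that $T$ is still summed up to $n$).

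In the inductive step I would substitute the hypothesis for $m-1$ blocks into $H^{\star}_n[T]$ and the hypothesis for smaller $n$ into each $H^{\star}_{n-1}[\{2\}^{l},1,T]$, turning every term into a sum of the nested objects $\widehat{\mathcal{H}}$, and then match the outcome with the target $\sum_{\bf p}\widehat{\mathcal{H}}_n[{\bf p};\widetilde{\bf p}]$. The right way to organize the target is by the first symbol $\circ$: a comma keeps the first part equal to $2s_1+1$ and leaves the remaining parts a composition ${\bf q}=(q_1,q_2,\ldots)$ of $(2s_2+1,\ldots,2s_m+1)$, whereas a plus merges the first block into the leading part of ${\bf q}$, replacing $q_1$ by $2s_1+1+q_1$ and $\tilde q_1$ by $s_1+1+\tilde q_1$. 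Grouping all terms on both sides according to the fixed tail composition ${\bf q}$, the problem collapses to a single \emph{one-block} identity in which the tail enters only through $\mathcal{H}_{k-1}[{\bf q};\widetilde{\bf q}]$ and $\mathcal{H}_{k-1}[q_2,\ldots;\tilde q_2,\ldots]$. This per-tail identity is the direct analogue of the computation (\ref{t2(3)})--(\ref{t2(4)}), with the fixed auxiliary sum $V_{k-1}(2b)$ there playing the role of the fixed tail sums here.

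The hard part will be the reconciliation of the plus (merging) terms. The comma terms assemble straightforwardly: summing the geometric series in $l$ by means of (\ref{t1(3)}) converts the $\widehat{\mathcal{H}}_{n-1}$ contributions into the desired $\widehat{\mathcal{H}}_n$ contributions with first part $2s_1+1$, together with a correction term carrying a factor $[k]_q$. The plus terms encode the case in which the leading tail index coincides with the binomial index $k$; after simplification they too produce $[k]_q$-weighted $q$-binomial sums. The crux is that the correction term, the plus (merge) contributions, and the boundary term $q^{(s_1+1)n}[n]_q^{-(2s_1+1)}H^{\star}_n[T]$ arising from the equal-index case must cancel against one another, and this is precisely the role of (\ref{i2}): it collapses each $[k]_q$-weighted sum into a single closed form, exactly as in the passage from (\ref{t2(3)}) to (\ref{t2(4)}). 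A structural point worth checking first is that the factor $1+q^{k}$ carried by the outer index in the definition of $\widehat{\mathcal{H}}$ is exactly the factor $1+q^{k}$ present inside the weight $A_{n,k}$ of that proof, so that (\ref{i2}) applies to the merged part without any correction. Once the per-tail identity is verified, summing over all tail compositions ${\bf q}$ and invoking the two induction hypotheses closes the induction; the inner induction on $n$ is anchored at $n=0$, where both sides vanish.
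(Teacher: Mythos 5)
Your proposal is correct and follows essentially the same route as the paper's own proof: the same decomposition of $H^{\star}_n$ by how many leading indices equal $n$, the same double induction (the paper runs it on $n+m$), the same splitting of compositions according to whether the first $\circ$ is a comma or a plus, summation over $l$ via (\ref{t1(3)}), and the same use of (\ref{i2}) to collapse the $[k]_q$-weighted correction terms so that the merge, correction, and boundary contributions cancel. The paper organizes that last cancellation by expanding $\widehat{\mathcal{H}}_n[-1,{\bf p};0,\widetilde{\bf p}]$ into $[n]_q\widehat{\mathcal{H}}_n[{\bf p};\widetilde{\bf p}]-\widehat{\mathcal{H}}_n[p_1-1,p_2,\ldots,p_r;\widetilde{p}_1,\ldots,\widetilde{p}_r]$ rather than by fixing a tail composition, but this is the same computation in a different arrangement.
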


\begin{proof} Note that for $m=1,$ the theorem is true by (\ref{t2(1)}). We proceed by induction on $n+m.$ If $n=1,$ then
$H^{\star}_1[\{2\}^{s_1}, 1,  \ldots, \{2\}^{s_m},1]=q^{s_1+\cdots+s_m+m}.$ On the other hand, $\widehat{\mathcal{H}}_1[p_1,\ldots,p_r; \widetilde{p_1},\ldots,\widetilde{p_r}]=0$ if $r>1$ and $\widehat{\mathcal{H}}_1[p_1; \widetilde{p_1}]=\widehat{\mathcal{H}}_1[2s_1+\cdots+2s_m+m;
s_1+\cdots+s_m+m]=q^{s_1+\cdots+s_m+m},$ and therefore the equality in (\ref{t2.1}) holds trivially for $n=1$ and any $m\ge 1.$

Now assume that $n>1,$ $m\ge 2$ and apply the expansion
\begin{equation*}
\begin{split}
H^{\star}_n[\{2\}^{s_1}, 1,  \ldots, \{2\}^{s_m},1]&=\sum_{l=0}^{s_1}\frac{q^{n(s_1-l)}}{[n]_q^{2(s_1-l)}}\,H^{\star}_{n-1}[\{2\}^{l}, 1, \{2\}^{s_2}, 1, \ldots, \{2\}^{s_m},1] \\[3pt]
&\qquad +\frac{q^{(s_1+1)n}}{[n]_q^{2s_1+1}}\,H^{\star}_n[\{2\}^{s_2}, 1, \ldots, \{2\}^{s_m},1],
\end{split}
\end{equation*}
then by induction, we have
\begin{equation} \label{Eq01}
\begin{split}
H^{\star}_n[\{2\}^{s_1}, 1,  \ldots, \{2\}^{s_m},1]&=\sum_{l=0}^{s_1}\frac{q^{n(s_1-l)}}{[n]_q^{2(s_1-l)}}\sum_{{\bf p}=(2l+1)\circ(2s_2+1)\circ
\ldots\circ(2s_m+1)}\widehat{\mathcal{H}}_{n-1}[{\bf p}; {\bf \widetilde{p}}] \\
&\qquad +\frac{q^{(s_1+1)n}}{[n]_q^{2s_1+1}}\sum_{{\bf p}=(2s_2+1)\circ\ldots\circ(2s_m+1)}\widehat{\mathcal{H}}_n[{\bf p}; {\bf \widetilde{p}}].
\end{split}
\end{equation}
Next, note that the set of all strings $(2l+1)\circ(2s_2+1)\circ\ldots\circ(2s_m+1)$ falls naturally into two nonintersecting classes $K_1$ and $K_2$
corresponding to the fixed choice of the first $\circ$ as comma or as plus, respectively. Then any string from $K_1$ has the form ${\bf p}=
(2l+1,p_2,\ldots,p_r),$ where $(p_2,\ldots,p_r)=(2s_2+1)\circ\ldots\circ(2s_m+1),$ and any string from the second class $K_2$ is given by
${\bf p}=(2l+s_2+2+t_1,t_2,\ldots,t_u),$ where $(t_1,t_2,\ldots,t_u)=0\circ(2s_3+1)\circ\ldots\circ(2s_m+1).$

Now considering the double sum from (\ref{Eq01}) and splitting the inner sum into two parts in accordance with the above subdivision of strings, we have
\begin{equation} \label{Eq02}
\begin{split}
&\sum_{{\bf p}=(2l+1)\circ(2s_2+1)\circ\ldots\circ(2s_m+1)}
\widehat{\mathcal{H}}_{n-1}[{\bf p};\widetilde{\bf p}]
=\sum_{{\bf p}=(2s_2+1)\circ\ldots\circ(2s_m+1)}
\widehat{\mathcal{H}}_{n-1}[2l+1,{\bf p};l+1,\widetilde{\bf p}] \\
&\quad +\!\sum_{{\bf t}=0\circ(2s_3+1)\circ\ldots\circ(2s_m+1)}
\widehat{\mathcal{H}}_{n-1}[2l+2s_2+2+t_1,t_2,\ldots,t_u;l+s_2+2+\widetilde{t}_1,\widetilde{t}_2,\ldots,\widetilde{t}_u],
\end{split}
\end{equation}
where $\widetilde{0}=0.$ Substituting (\ref{Eq02}) in (\ref{Eq01}) and summing  over $l$ by the formula
\begin{equation*}
\frac{\genfrac{[}{]}{0pt}{}{n-1}{k}}{\genfrac{[}{]}{0pt}{}{n-1+k}{k}}\sum_{l=0}^{s_1}
\frac{[n]_q^{2l}}{[k]_q^{2l}}\,q^{(k-n)l}=\frac{\genfrac{[}{]}{0pt}{}{n}{k}}{\genfrac{[}{]}{0pt}{}{n+k}{k}}\left(
\frac{[n]_q^{2s_1}}{[k]_q^{2s_1}}\,q^{(k-n)s_1}-\frac{[k]_q^2}{[n]_q^2}\,q^{n-k}\right),
\end{equation*}
we obtain
\begin{equation} \label{Eq03}
\begin{split}
&H^{\star}_n[\{2\}^{s_1}, 1,  \ldots, \{2\}^{s_m},1]=\sum_{{\bf p}=(2s_2+1)\circ\ldots\circ(2s_m+1)}\widehat{\mathcal{H}}_n[2s_1+1,{\bf p}; s_1+1,
\widetilde{\bf p}] \\
&\,\,\, -\frac{q^{n(s_1+1)}}{[n]_q^{2s_1+2}}\!\sum_{{\bf p}=(2s_2+1)\circ\ldots\circ(2s_m+1)}\widehat{\mathcal{H}}_n[-1,{\bf p}; 0, \widetilde{\bf p}]
+\frac{q^{(s_1+1)n}}{[n]_q^{2s_1+1}}\!\sum_{{\bf p}=(2s_2+1)\circ\ldots\circ(2s_m+1)}\widehat{\mathcal{H}}_n[{\bf p}; \widetilde{\bf p}]
\\[3pt]
&\,\,\, +\!\sum_{{\bf t}=0\circ(2s_3+1)\circ\ldots\circ(2s_m+1)}\widehat{\mathcal{H}}_n[2s_1+2s_2+2+t_1,t_2,\ldots,t_u;s_1+s_2+2+\widetilde{t}_1,
\widetilde{t}_2,\ldots,\widetilde{t}_u]\\[2pt]
&\,\,\, -\frac{q^{n(s_1+1)}}{[n]_q^{2s_1+2}}
\sum_{{\bf t}=0\circ(2s_3+1)\circ\ldots\circ(2s_m+1)}\widehat{\mathcal{H}}_n[2s_2+t_1,t_2,\ldots,t_u;s_2+1+\widetilde{t}_1,
\widetilde{t}_2,\ldots,\widetilde{t}_u].
\end{split}
\end{equation}
Noticing that the first and fourth sums on the right-hand side of (\ref{Eq03}) add up to
\begin{equation*}
\sum_{{\bf p}=(2s_1+1)\circ(2s_2+1)\circ\ldots\circ(2s_m+1)}
\widehat{\mathcal{H}}_n[{\bf p};\widetilde{\bf p}],
\end{equation*}
 we have
\begin{equation} \label{Eq04}
\begin{split}
 &\,H^{\star}_n[\{2\}^{s_1}, 1,  \ldots, \{2\}^{s_m},1]-\!\!\sum_{{\bf p}=(2s_1+1)\circ\ldots\circ(2s_m+1)}
\widehat{\mathcal{H}}_n[{\bf p};\widetilde{\bf p}]
 \\[2pt]
&= \frac{q^{(s_1+1)n}}{[n]_q^{2s_1+1}}\!\sum_{{\bf p}=(2s_2+1)\circ\ldots\circ(2s_m+1)}\widehat{\mathcal{H}}_n[{\bf p}; \widetilde{\bf p}]
-\frac{q^{n(s_1+1)}}{[n]_q^{2s_1+2}}\!\sum_{{\bf p}=(2s_2+1)\circ\ldots\circ(2s_m+1)}\widehat{\mathcal{H}}_n[-1,{\bf p}; 0, \widetilde{\bf p}]
\\[3pt]
&\quad\,\,\, -\frac{q^{n(s_1+1)}}{[n]_q^{2s_1+2}}
\sum_{{\bf t}=0\circ(2s_3+1)\circ\ldots\circ(2s_m+1)}\widehat{\mathcal{H}}_n[2s_2+t_1,t_2,\ldots,t_u;s_2+1+\widetilde{t}_1,
\widetilde{t}_2,\ldots,\widetilde{t}_u].
\end{split}
\end{equation}
Finally, expanding $\widehat{\mathcal{H}}_n[-1,{\bf p}; 0, \widetilde{\bf p}],$ rearranging the order of summation, and applying (\ref{i2}),
we obtain
\begin{equation*}
\begin{split}
\widehat{\mathcal{H}}_n&[-1,{\bf p}; 0, \widetilde{\bf p}]=\sum_{k=1}^n\frac{\genfrac{[}{]}{0pt}{}{n}{k}}{\genfrac{[}{]}{0pt}{}{n+k}{k}}
\frac{q^{k^2-k}}{[k]_q^{-1}}\,(1+q^k)\sum_{l=1}^{k-1}\frac{q^{(\widetilde{p}_1-1)l}(1+q^l)}{[l]_q^{p_1}}\,\mathcal{H}_{l-1}[p_2,\ldots,p_r;\widetilde{p}_2,
\ldots,\widetilde{p}_r]\\
&=\sum_{l=1}^{n}\frac{q^{(\widetilde{p}_1-1)l}(1+q^l)}{[l]_q^{p_1}}\,\mathcal{H}_{l-1}[p_2,\ldots,p_r;\widetilde{p}_2,
\ldots,\widetilde{p}_r]\sum_{k=l+1}^n\frac{\genfrac{[}{]}{0pt}{}{n}{k}}{\genfrac{[}{]}{0pt}{}{n+k}{k}}
[k]_qq^{k^2-k}(1+q^k)\\
&=[n]_q\widehat{\mathcal{H}}_n[{\bf p};\widetilde{\bf p}]-\widehat{\mathcal{H}}_n[p_1-1,p_2,\ldots,p_r; \widetilde{p}_1,\widetilde{p}_2,\ldots,\widetilde{p}_r].
\end{split}
\end{equation*}
Substituting the last expression in (\ref{Eq04}) and simplifying, we have
\begin{equation} \label{Eq05}
\begin{split}
H^{\star}_n&[\{2\}^{s_1}, 1,  \ldots, \{2\}^{s_m},1]-\!\!\sum_{{\bf p}=(2s_1+1)\circ\ldots\circ(2s_m+1)}
\widehat{\mathcal{H}}_n[{\bf p};\widetilde{\bf p}]
 \\[2pt]
&=\frac{q^{n(s_1+1)}}{[n]_q^{2s_1+2}}\sum_{{\bf p}=(2s_2+1)\circ\ldots\circ(2s_m+1)}
\widehat{\mathcal{H}}_n[p_1-1,p_2,\ldots,p_r; \widetilde{p}_1,\widetilde{p}_2,\ldots,\widetilde{p}_r] \\
&\,\,\,-\frac{q^{n(s_1+1)}}{[n]_q^{2s_1+2}}
\sum_{{\bf t}=0\circ(2s_3+1)\circ\ldots\circ(2s_m+1)}\widehat{\mathcal{H}}_n[2s_2+t_1,t_2,\ldots,t_u;s_2+1+\widetilde{t}_1,
\widetilde{t}_2,\ldots,\widetilde{t}_u].
\end{split}
\end{equation}
It is easy to see that the last two sums in (\ref{Eq05}) are equal and we obtain the required identity.
\end{proof}

In the next theorem, we consider two-one strings ending with $2.$

Let $m$ be a nonnegative integer and ${\bf s}=(s_1,\ldots,s_m)\in{\mathbb Z}^m,$ ${\bf t}=(t_1,\ldots,t_m)\in{\mathbb Z}^m.$ We define the nested sum
\begin{equation*}
\overline{\mathcal{H}}_n[{\bf s}, {\bf t}]=
\sum_{n\ge k_1>\ldots>k_m\ge 1}(-1)^{k_m}\frac{\genfrac{[}{]}{0pt}{}{n}{k_1}}{\genfrac{[}{]}{0pt}{}{n+k_1}{k_1}}\,
q^{k_1^2-k_m(k_m-1)/2}
\prod_{j=1}^m\frac{(1+q^{k_j}) q^{(t_j-1)k_j}}{[k_j]_q^{s_j}}
\end{equation*}
and put  $\overline{\mathcal{H}}_n[{\bf s}, {\bf t}]=0$ if $n<m,$ and $\overline{\mathcal{H}}_n[\emptyset]=1$ for all $n\ge 0$ and $m=0.$

\begin{theorem} \label{t3}
Let $m, s_1, \ldots, s_m\in {\mathbb N}_0,$ $s_{m+1}\in {\mathbb N}.$  Then for any positive integer $n,$
\begin{equation} \label{t3.1}
H^{\star}_n[\{2\}^{s_1}, 1,  \ldots, \{2\}^{s_m},1, \{2\}^{s_{m+1}}]=-\sum_{{\bf p}=(2s_1+1)\circ\ldots\circ(2s_m+1)\circ(2s_{m+1})}
\overline{\mathcal{H}}_n[{\bf p}; {\bf \widetilde{p}}],
\end{equation}
where $\circ$ is either comma or plus, and the string ${\bf \widetilde{p}}:=(s_1+1)\circ\ldots\circ(s_m+1)\circ(s_{m+1})$
is associated with the string ${\bf p}.$ This means that the choice of commas and pluses in ${\bf p}=(2s_1+1)\circ\ldots\circ(2s_m+1)\circ(2s_{m+1})$ and ${\bf \widetilde{p}}=(s_1+1)\circ\ldots\circ(s_m+1)\circ(s_{m+1})$ is the same.
\end{theorem}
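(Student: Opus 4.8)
The plan is to run the same induction as in the proof of Theorem \ref{t2}, the point being that replacing the trailing $1$ by a block $\{2\}^{s_{m+1}}$ changes the innermost summand of the nested sum only by the factor $(-1)^{k_m}q^{-k_m(k_m-1)/2}$, and that this factor sits on the innermost variable, which the recursion on the leading block never disturbs. First I would settle the degenerate cases. For $m=0$ the left-hand side is $H^{\star}_n[\{2\}^{s_{m+1}}]$ and the only admissible string is ${\bf p}=(2s_{m+1})$; expanding $\overline{\mathcal{H}}_n[2s_{m+1};s_{m+1}]$ and using $k^2-k(k-1)/2=k(k+1)/2$ together with $(s_{m+1}-1)k=s_{m+1}k-k$ turns $-\overline{\mathcal{H}}_n[2s_{m+1};s_{m+1}]$ into the right-hand side of (\ref{t1(2)}) with $a=s_{m+1}$, so the base case is exactly (\ref{t1(2)}). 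For $n=1$ every string of length exceeding $1$ forces $\overline{\mathcal{H}}_1=0$, and the single all-plus string contributes $\overline{\mathcal{H}}_1=-q^{s_1+\cdots+s_{m+1}+m}$ (the $k_1=1$ prefactor $\tfrac{1}{[2]_q}\,q\,(1+q)=q$ collapsing), which matches $H^{\star}_1[\{2\}^{s_1},1,\ldots,\{2\}^{s_{m+1}}]=q^{s_1+\cdots+s_{m+1}+m}$ against the leading minus sign in (\ref{t3.1}).

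For the inductive step I would take $n>1$ and $m\ge 1$ and apply the peeling expansion
\begin{equation*}
H^{\star}_n[\{2\}^{s_1},1,\ldots,\{2\}^{s_{m+1}}]=\sum_{l=0}^{s_1}\frac{q^{n(s_1-l)}}{[n]_q^{2(s_1-l)}}\,H^{\star}_{n-1}[\{2\}^{l},1,\{2\}^{s_2},1,\ldots,\{2\}^{s_{m+1}}]+\frac{q^{(s_1+1)n}}{[n]_q^{2s_1+1}}\,H^{\star}_n[\{2\}^{s_2},1,\ldots,\{2\}^{s_{m+1}}],
\end{equation*}
feeding the induction hypothesis into each term (smaller $n$ in the first group, smaller $m$ in the last term, so $n+m$ decreases); each instance contributes a factor $-1$ through the sign in (\ref{t3.1}), so the whole computation is uniformly $-1$ times the one in Theorem \ref{t2}. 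I would then split the strings $(2l+1)\circ(2s_2+1)\circ\cdots\circ(2s_{m+1})$ into the classes $K_1$ (first $\circ$ a comma) and $K_2$ (first $\circ$ a plus) precisely as there, perform the sum over $l$ by the $q$-binomial identity
\begin{equation*}
\frac{\genfrac{[}{]}{0pt}{}{n-1}{k}}{\genfrac{[}{]}{0pt}{}{n-1+k}{k}}\sum_{l=0}^{s_1}\frac{[n]_q^{2l}}{[k]_q^{2l}}\,q^{(k-n)l}=\frac{\genfrac{[}{]}{0pt}{}{n}{k}}{\genfrac{[}{]}{0pt}{}{n+k}{k}}\Bigl(\frac{[n]_q^{2s_1}}{[k]_q^{2s_1}}\,q^{(k-n)s_1}-\frac{[k]_q^2}{[n]_q^2}\,q^{n-k}\Bigr),
\end{equation*}
and recombine the $K_1$- and $K_2$-contributions into $\sum_{{\bf p}=(2s_1+1)\circ\cdots\circ(2s_{m+1})}\overline{\mathcal{H}}_n[{\bf p};\widetilde{\bf p}]$ together with correction terms carrying the prefactor $q^{n(s_1+1)}/[n]_q^{2s_1+2}$. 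This yields, up to the overall sign, the analogue of (\ref{Eq04}), in which the surviving corrections involve only $\overline{\mathcal{H}}_n[-1,{\bf p};0,\widetilde{\bf p}]$ and a sum over the reduced family ${\bf t}=0\circ(2s_3+1)\circ\cdots\circ(2s_{m+1})$.

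The crux is the contraction of $\overline{\mathcal{H}}_n[-1,{\bf p};0,\widetilde{\bf p}]$. Expanding the prepended pair $(-1;0)$ converts its outermost factor into $(1+q^{k_1})[k_1]_q\,q^{k_1(k_1-1)}$, so summing the outermost variable from $l+1$ to $n$ by (\ref{i2}) pulls the Gaussian binomial back onto the next variable and gives
\begin{equation*}
\overline{\mathcal{H}}_n[-1,{\bf p};0,\widetilde{\bf p}]=[n]_q\,\overline{\mathcal{H}}_n[{\bf p};\widetilde{\bf p}]-\overline{\mathcal{H}}_n[p_1-1,p_2,\ldots,p_r;\widetilde{p}_1,\ldots,\widetilde{p}_r].
\end{equation*}
Here the innermost factor $(-1)^{k_m}q^{-k_m(k_m-1)/2}$ is merely carried along by the summation over the outermost variable, so this is formally the same contraction used for $\widehat{\mathcal{H}}$ in Theorem \ref{t2}. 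Substituting it back produces the analogue of (\ref{Eq05}) and leaves two residual sums, $\sum_{(2s_2+1)\circ\cdots}\overline{\mathcal{H}}_n[p_1-1,p_2,\ldots;\widetilde{\bf p}]$ and $\sum_{0\circ(2s_3+1)\circ\cdots}\overline{\mathcal{H}}_n[2s_2+t_1,t_2,\ldots;s_2+1+\widetilde{t}_1,\widetilde{t}_2,\ldots]$; matching the comma and plus alternatives of the first $\circ$ in the two families shows they agree term by term, which gives (\ref{t3.1}).

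The step I expect to be the \emph{main obstacle} is the bookkeeping confirming that the innermost sign-and-shift factor $(-1)^{k_m}q^{-k_m(k_m-1)/2}$ truly decouples from every manipulation acting on the leading variable (the splitting into $K_1,K_2$, the sum over $l$, and the (\ref{i2})-contraction). Once that decoupling is in hand, the string combinatorics is identical to that of Theorem \ref{t2}, and the only genuinely new computation is the elementary exponent matching in the $m=0$ base case.
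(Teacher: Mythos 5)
Your proposal is correct and is essentially the paper's own route: the paper proves Theorem \ref{t3} by noting that $m=0$ and $m=1$ are exactly (\ref{t1(2)}) and (\ref{t2(2)}) and that for $m\ge 2$ the induction of Theorem \ref{t2} carries over verbatim, which is precisely the argument you flesh out, including the key contraction
$\overline{\mathcal{H}}_n[-1,{\bf p};0,\widetilde{\bf p}]=[n]_q\,\overline{\mathcal{H}}_n[{\bf p};\widetilde{\bf p}]-\overline{\mathcal{H}}_n[p_1-1,p_2,\ldots,p_r;\widetilde{p}_1,\ldots,\widetilde{p}_r]$ via (\ref{i2}), which indeed never disturbs the innermost factor $(-1)^{k_m}q^{-k_m(k_m-1)/2}$. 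The one deviation is that you run the induction down to $m=1$, where your merged-string bookkeeping (entry $2l+2s_2+2+t_1$ with ${\bf t}=0\circ(2s_3+1)\circ\cdots$) is literally written for a second block of the odd form $(2s_2+1)$; the cancellation does still go through when the second block is the final even block $(2s_{m+1})$, but the cleaner fix is the paper's, namely to take $m=1$ as the base case (\ref{t2(2)}) already established in Theorem \ref{t1}.
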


Note that for $m=0,$ identity (\ref{t3.1}) coincides with (\ref{t1(2)}). For $m=1,$ the theorem becomes~(\ref{t2(2)}).
For $m\ge 2,$ the proof of Theorem \ref{t3} follows exactly by the same reasoning  as the proof of Theorem \ref{t2} and is left to the reader.

{\it Remark.}
Letting $q\to 1$ in (\ref{t2.1}), we get Zhao's identity  \cite[Theorem 2.2]{Zhao:13}:
\begin{equation} \label{Eq06}
H^{\star}_n(\{2\}^{s_1}, 1, \ldots, \{2\}^{s_m},1)=\sum_{{\bf p}=(2s_1+1)\circ\ldots\circ(2s_m+1)}2^{l({\bf p})}
\widehat{H}_n({\bf p}),
\end{equation}
where
\begin{equation*}
\begin{split}
H^{\star}_n({\bf p})&=H^{\star}_n(p_1,p_2,\ldots,p_r)=\sum_{n\ge k_1\ge k_2\ge \ldots\ge k_r\ge 1}\frac{1}{k_1^{p_1}k_2^{p_2}\cdots k_r^{p_r}}, \\
\widehat{H}_n({\bf p})&=\widehat{H}_n(p_1,p_2,\ldots,p_r)=\sum_{n\ge k_1>k_2>\ldots>k_r\ge 1}
\frac{\binom{n}{k_1}}{\binom{n+k_1}{n}}\frac{1}{k_1^{p_1}k_2^{p_2}\cdots k_r^{p_r}}
\end{split}
\end{equation*}
are ordinary multiple harmonic sums, and $l({\bf p})$  is the length of the string ${\bf p}.$

Similarly, letting $q\to 1$ in (\ref{t3.1}), we get a multiple harmonic sum identity from \cite[Theorem 2.4]{Zhao:13} for two-one strings ending with $2:$
\begin{equation} \label{Eq07}
H^{\star}_n(\{2\}^{s_1}, 1, \ldots, \{2\}^{s_m},1, \{2\}^{s_{m+1}})=\sum_{{\bf p}=(2s_1+1)\circ\ldots\circ(2s_m+1)\circ(2s_{m+1})}2^{l({\bf p})}\,
\overline{\!H}_n({\bf p}),
\end{equation}
where
\begin{equation*}
\overline{\!H}_n({\bf p})=\overline{\!H}_n(p_1,p_2,\ldots,p_r)=\sum_{n\ge k_1>k_2>\ldots>k_r\ge 1}
\frac{\binom{n}{k_1}}{\binom{n+k_1}{n}}\frac{(-1)^{k_r-1}}{k_1^{p_1}k_2^{p_2}\cdots k_r^{p_r}},
\end{equation*}
and $l({\bf p})=\sigma({\bf p})+1$  is the length of the string ${\bf p},$ and
$\sigma({\bf p})$ is the number of commas in ${\bf p}=(2s_1+1)\circ\ldots\circ(2s_m+1)\circ(2s_{m+1}).$

\section{Applications to $q$-zeta values}

In this section, we prove Theorem \ref{T1} and Theorem \ref{T2}. For this purpose, we need a lemma that justifies limit transition from finite
$q$-binomial identities for multiple harmonic sums to corresponding relations for multiple $q$-zeta values.

\begin{lemma} \label{last}
Let $0<q<1,$ $c, c_1, c_2\in {\mathbb R},$ $c>0,$ and let $R_k$ be a sequence of real numbers satisfying $|R_k|<k^{c_1}q^{c_2k}$ for all $k=1,2,\ldots$.
Then
\begin{equation*}
\lim_{n\to\infty}\sum_{k=1}^nq^{ck^2}\left(1-\frac{\genfrac{[}{]}{0pt}{}{n}{k}}{\genfrac{[}{]}{0pt}{}{n+k}{k}}\right)R_k=0.
\end{equation*}
\end{lemma}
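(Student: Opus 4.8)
We must show that the "defect" between the $q$-binomial weighted sum and the plain sum vanishes in the limit. The key object is the factor
$$
1-\frac{\genfrac{[}{]}{0pt}{}{n}{k}}{\genfrac{[}{]}{0pt}{}{n+k}{k}},
$$
which measures how far the $q$-binomial ratio is from $1$. Since the theorems express finite MHS as weighted sums $\widehat{\mathcal{H}}_n$ involving exactly this ratio, while the limiting $q$-zeta values involve the same sums with the ratio replaced by $1$, the lemma is precisely the device that lets us drop these binomial weights as $n\to\infty$. The bound $|R_k|<k^{c_1}q^{c_2k}$ absorbs the polynomial-times-exponential growth of the harmonic-sum factors.

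Let me reconstruct the proof.

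=== BEGIN INSERT ===

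The plan is to obtain an explicit, manageable upper bound for the ratio $\genfrac{[}{]}{0pt}{}{n}{k}/\genfrac{[}{]}{0pt}{}{n+k}{k}$ and hence for the defect factor $1-\genfrac{[}{]}{0pt}{}{n}{k}/\genfrac{[}{]}{0pt}{}{n+k}{k}$, and then to dominate the whole sum by a convergent series independent of $n$ so that the Gaussian factor $q^{ck^2}$ forces the limit to vanish. First I would write out the ratio explicitly. From the definition,
$$
\frac{\genfrac{[}{]}{0pt}{}{n}{k}}{\genfrac{[}{]}{0pt}{}{n+k}{k}}
=\frac{(q)_n^2}{(q)_{n-k}(q)_{n+k}}
=\prod_{j=1}^{k}\frac{1-q^{n-k+j}}{1-q^{n+j}}.
$$
Since $0<q<1$, every factor in this product lies in $(0,1)$, so the ratio itself lies in $(0,1)$ and consequently
$$
0\le 1-\frac{\genfrac{[}{]}{0pt}{}{n}{k}}{\genfrac{[}{]}{0pt}{}{n+k}{k}}\le 1
$$
for all $1\le k\le n$. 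This crude two-sided bound already shows each summand is at most $q^{ck^2}|R_k|\le q^{ck^2}k^{c_1}q^{c_2k}$ in absolute value.

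Next I would exhibit a summable dominating series. Because $c>0$, the Gaussian weight $q^{ck^2}$ decays super-exponentially, so the series $\sum_{k=1}^\infty q^{ck^2}k^{c_1}q^{c_2k}$ converges (for any real $c_1,c_2$, since $q^{ck^2}$ eventually beats any $k^{c_1}q^{c_2k}$). Denote its tail by $T_N=\sum_{k=N+1}^\infty q^{ck^2}k^{c_1}q^{c_2k}$, which tends to $0$ as $N\to\infty$. Now split the sum at an index $N$ that I will let grow slowly with $n$. For the tail part $k>N$ the crude bound $\bigl|1-\genfrac{[}{]}{0pt}{}{n}{k}/\genfrac{[}{]}{0pt}{}{n+k}{k}\bigr|\le 1$ gives a contribution bounded by $T_N$, uniformly in $n$. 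For the head part $k\le N$ I need the sharper observation that the defect factor tends to $0$ pointwise in $k$: as $n\to\infty$ each factor $(1-q^{n-k+j})/(1-q^{n+j})\to 1$, so the finite product $\to 1$ and hence $1-\genfrac{[}{]}{0pt}{}{n}{k}/\genfrac{[}{]}{0pt}{}{n+k}{k}\to 0$ for each fixed $k$.

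Combining these, the argument is a standard $\varepsilon/2$ splitting. Given $\varepsilon>0$, choose $N$ so that $T_N<\varepsilon/2$; this controls the tail for all $n$ at once. With $N$ now fixed, the head is a finite sum of $N$ terms, each of which tends to $0$ as $n\to\infty$ by the pointwise convergence just noted (the factors $q^{ck^2}R_k$ being bounded constants for $k\le N$), so for all sufficiently large $n$ the head is $<\varepsilon/2$. Hence the full sum is $<\varepsilon$ for large $n$, which is the claim. The only step requiring care is the interchange of limit and summation, and the device that legitimizes it is precisely the uniform tail bound $T_N$ furnished by the super-exponential Gaussian factor $q^{ck^2}$; I expect this to be the crux, though it is routine once the explicit product formula for the ratio and the two-sided bound $0\le 1-\genfrac{[}{]}{0pt}{}{n}{k}/\genfrac{[}{]}{0pt}{}{n+k}{k}\le 1$ are in hand.
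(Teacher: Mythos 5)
Your proof is correct, and it reaches the conclusion by a genuinely different route than the paper. The paper splits the sum at the $n$-dependent point $k=n/2$ and makes both pieces quantitatively small: for $1\le k<n/2$ it proves the uniform estimate $1-\genfrac{[}{]}{0pt}{}{n}{k}\big/\genfrac{[}{]}{0pt}{}{n+k}{k}=O(q^{c_3n})$ (the defect is exponentially small in $n$, uniformly in that range), so the head is $O(q^{c_3n})$ times a convergent series; for $n/2\le k\le n$ it uses the same trivial bound $0<1-\genfrac{[}{]}{0pt}{}{n}{k}\big/\genfrac{[}{]}{0pt}{}{n+k}{k}<1$ that you use, together with $q^{ck^2}\le q^{cn^2/4}$, to bound the tail by $n^{c_4}q^{cn^2/4+c_5n}$. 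You instead run a Tannery-type (dominated-convergence) argument: split at a \emph{fixed} $N$, bound the tail uniformly in $n$ by the tail $T_N$ of the convergent dominating series $\sum_k q^{ck^2}k^{c_1}q^{c_2k}$, and kill the finitely many head terms using only the pointwise limit of the defect as $n\to\infty$. Your route is softer and arguably more elementary: it requires no rate of convergence for the defect, only its positivity, boundedness by $1$, and pointwise limit, and it would apply verbatim to any bounded weights tending pointwise to $1$. What the paper's route buys in exchange is an explicit exponential rate $O\bigl(q^{c_3n}+n^{c_4}q^{cn^2/4+c_5n}\bigr)$ for the whole sum, i.e., an effective version of the lemma, consistent with the paper's remark that these series converge rapidly and can be used for fast numerical computation. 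One cosmetic slip: you announce that the cut point $N$ ``will grow slowly with $n$,'' but your actual $\varepsilon/2$ argument (correctly) keeps $N$ fixed once $T_N<\varepsilon/2$; that sentence should simply be deleted.
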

\begin{proof}
First, we notice that for $1\le k< n/2$ and $n$ sufficiently large,
\begin{equation*}
1-\frac{\genfrac{[}{]}{0pt}{}{n}{k}}{\genfrac{[}{]}{0pt}{}{n+k}{k}}=1-\frac{(q;q)_n^2}{(q;q)_{n-k}(q;q)_{n+k}}=
1-\frac{(1-q^{n-k+1})(1-q^{n-k+2})\cdots(1-q^n)}{(1-q^{n+1})(1-q^{n+2})\cdots(1-q^{n+k})}=O(q^{c_3n}),
\end{equation*}
where $c_3$ is some positive constant independent of $n.$ Therefore, we have
\begin{equation} \label{Eq08}
\left|\sum_{k=1}^{n/2}q^{ck^2}\left(1-\frac{\genfrac{[}{]}{0pt}{}{n}{k}}{\genfrac{[}{]}{0pt}{}{n+k}{k}}\right)R_k\right|
<O(q^{c_3n})\sum_{k=1}^{n/2}q^{ck^2}|R_k|<O(q^{c_3n})\sum_{k=1}^{n/2}k^{c_2}q^{ck^2+c_1k}.
\end{equation}
On the other side, for $n/2\le k\le n,$ we can apply the trivial inequality
\begin{equation*}
0<1-\frac{\genfrac{[}{]}{0pt}{}{n}{k}}{\genfrac{[}{]}{0pt}{}{n+k}{k}}=
1-\frac{(1-q^{n-k+1})(1-q^{n-k+2})\cdots(1-q^n)}{(1-q^{n+1})(1-q^{n+2})\cdots(1-q^{n+k})}<1
\end{equation*}
to obtain
\begin{equation} \label{Eq09}
\left|\sum_{k=n/2}^{n}q^{ck^2}\left(1-\frac{\genfrac{[}{]}{0pt}{}{n}{k}}{\genfrac{[}{]}{0pt}{}{n+k}{k}}\right)R_k\right|<
q^{cn^2/4}\sum_{k=n/2}^n|R_k|<n^{c_4}q^{cn^2/4+c_5n},
\end{equation}
where $c_4, c_5$ are some real constants independent of $n.$ Now letting $n$ tend to infinity, by (\ref{Eq08}) and (\ref{Eq09}), we conclude the proof.
\end{proof}

Now Theorem \ref{T1} and Theorem \ref{T2} easily follow from Theorem \ref{t2} and Theorem \ref{t3} by Lemma \ref{last}.

\vspace{0.3cm}

{\bf \small Acknowledgement.} We would like to thank Wadim Zudilin for drawing our attention to Zhao's paper \cite{Zhao:13}.

\end{document}